\documentclass[secthm,seceqn,amsthm,ussrhead,reqno, 12pt]{amsart}
\usepackage[utf8]{inputenc}
\usepackage[english]{babel}
\usepackage[symbol]{footmisc}
\usepackage{amssymb,amsmath,amsthm,amsfonts,xcolor,enumerate,hyperref,comment,longtable,cleveref}

\usepackage{times}
\usepackage{cite}
\usepackage{pdflscape}
\usepackage{ulem}
\usepackage[mathcal]{euscript}
\usepackage{tikz}
\usepackage{hyperref}
\usepackage{cancel}
\usepackage{stmaryrd}



\newtheorem{thrm}{Theorem} 

\newtheorem{lem}[thrm]{Lemma}
\newtheorem{prop}[thrm]{Proposition}

\theoremstyle{definition}
\newtheorem{defn}[thrm]{Definition}

\newtheorem{rem}[thrm]{Remark}

\crefrangeformat{equation}{#3(#1)#4--#5(#2)#6}

\crefname{thrm}{Theorem}{Theorems}
\crefname{lem}{Lemma}{Lemmas}
\crefname{cor}{Corollary}{Corollaries}
\crefname{prop}{Proposition}{Propositions}
\crefname{defn}{Definition}{Definitions}
\crefname{exm}{Example}{Examples}
\crefname{rem}{Remark}{Remarks}
\crefname{section}{Section}{Sections}
\crefname{equation}{\unskip}{\unskip}
\crefname{enumi}{\unskip}{\unskip}

\DeclareMathOperator{\Ann}{Ann}

\renewcommand{\iff}{\Leftrightarrow}

\newcommand{\gen}[1]{\langle #1\rangle}

\newcommand{\af}{\alpha}
\newcommand{\bt}{\beta}
\newcommand{\gm}{\gamma}
\newcommand{\dl}{\delta}

\newcommand{\Dl}{\Delta}

\newcommand{\vf}{\varphi}

\newcommand{\LL}{\mathfrak{L}}

\newcommand{\sst}{\subseteq}

\newcommand{\id}{\mathrm{id}}

\newcommand{\ch}{\mathrm{char}}
\newcommand{\tr}{\mathrm{tr}}

\newcommand{\m}{{}^{-1}}

\setlength{\topmargin}{0mm}
\setlength{\textwidth}{170mm}  
\setlength{\textheight}{200mm}
\setlength{\evensidemargin}{0mm}
\setlength{\oddsidemargin}{0mm}

\begin{document}

	\noindent{\Large  
		Transposed Poisson structures on the Lie algebra of \\
  upper triangular matrices}
  \footnote{		The work was supported by  FCT   UIDB/MAT/00212/2020, UIDP/MAT/00212/2020, 2022.02474.PTDC and by CMUP, member of LASI, which is financed by national funds through FCT --- Fundação para a Ciência e a Tecnologia, I.P., under the project with reference UIDB/00144/2020.} 
	\footnote{Corresponding author: kaygorodov.ivan@gmail.com}
	
	\bigskip

	{\bf
		Ivan Kaygorodov\footnote{CMA-UBI, Universidade da Beira Interior, Covilh\~{a}, Portugal; \    kaygorodov.ivan@gmail.com}	\&
		Mykola Khrypchenko\footnote{ Departamento de Matem\'atica, Universidade Federal de Santa Catarina,     Brazil; \and CMUP, Departamento de Matemática, Faculdade de Ciências, Universidade do Porto,			Rua do Campo Alegre s/n, 4169--007 Porto, Portugal\ nskhripchenko@gmail.com}
	}
	\

	\bigskip
 
	\ 
	
	\noindent {\bf Abstract:} {\it 	
		We describe transposed Poisson structures on  the upper triangular matrix Lie algebra $T_n(F)$, $n>1$, over a field $F$ of characteristic zero. We prove that, for $n>2$, any such structure is either of Poisson type or the orthogonal sum of a fixed non-Poisson structure with a structure of Poisson type, and for $n=2$, there is one more class of transposed Poisson structures on $T_n(F)$.
  We also show that, up to isomorphism, the full matrix Lie algebra $M_n(F)$ admits only one non-trivial transposed Poisson structure, and it is of Poisson type.

	}

	\
	
	\noindent {\bf Keywords}: 
	{\it 	Transposed Poisson algebra, upper triangular matrix Lie algebra, full matrix Lie algebra, $\delta$-derivation.
	}

	\noindent {\bf MSC2020}: primary 17A30, 15B30; secondary 17B40, 17B63.  
	

	\section*{Introduction} 
	 Since their origin in the 1970s in Poisson geometry, Poisson algebras have appeared in several areas of mathematics and physics, such as algebraic geometry, operads, quantization theory, quantum groups, and classical and quantum mechanics. One of the natural tasks in the theory of Poisson algebras is the description of all such algebras with fixed Lie or associative part~\cite{YYZ07,jawo,kk21}.
  
	Recently, Bai, Bai, Guo, and Wu~\cite{bai20} have introduced a dual notion of the Poisson algebra, called a \textit{transposed Poisson algebra}, by exchanging the roles of the two multiplications in the Leibniz rule defining a Poisson algebra. A transposed Poisson algebra defined this way not only shares some properties of a Poisson algebra, such as the closedness under tensor products and the Koszul self-duality as an operad, but also admits a rich class of identities. It is important to note that a transposed Poisson algebra naturally arises from a Novikov-Poisson algebra by taking the commutator Lie algebra of its Novikov part.
 
	Thanks to \cite{bfk22}, 
	any unital transposed Poisson algebra is
	a particular case of a ``contact bracket'' algebra 
	and a quasi-Poisson algebra.
	In a recent paper by Ferreira, Kaygorodov, and  Lopatkin
	a relation between $\frac{1}{2}$-derivations of Lie algebras and 
	transposed Poisson algebras has been established \cite{FKL}. 	These ideas were used to describe all transposed Poisson structures 
	on  Witt and Virasoro algebras in  \cite{FKL};
	on   twisted Heisenberg-Virasoro,   Schr\"odinger-Virasoro  and  
	extended Schr\"odinger-Virasoro algebras in \cite{yh21};
	on   oscillator Lie algebras in  \cite{bfk22};
        on Schrodinger algebra in $(n+1)$-dimensional space-time in \cite{ytk};
        on Witt type Lie algebras in \cite{kk23};
 on generalized Witt algebras in \cite{kkg23} and Block Lie algebras in \cite{kk22,kkg23}.
	Any complex finite-dimensional solvable Lie algebra was proved to admit a non-trivial transposed Poisson structure \cite{klv22}.
	The algebraic and geometric classification of $3$-dimensional transposed Poisson algebras was given in \cite{bfk23}.	
 For the list of actual open questions on transposed Poisson algebras, see \cite{bfk22}.

In this paper we describe transposed Poisson structures on the upper triangular matrix Lie algebra $T_n(F)$ over a field $F$ of characteristic zero. To this end, we first characterize $\frac 12$-derivations of $T_n(F)$ in \cref{descr-Dl(T_n(F))}. Then the case $n>2$ is solved in \cref{descr-TP-on-T_n(F)}, and the case $n=2$ is treated separately in \cref{descr-TP-on-T_2(F)}. Namely, we prove in \cref{descr-TP-on-T_n(F)} that any transposed Poisson structure on $T_n(F)$, $n>2$, is either of Poisson type or the orthogonal sum of the fixed non-Poisson structure 
\begin{align*}
			e_{11}\cdot e_{11}=-e_{11}\cdot e_{nn}=e_{nn}\cdot e_{nn}=e_{1n}
\end{align*}
with a structure of Poisson type. If $n=2$, then there appears one more separate class of transposed Poisson structures on $T_n(F)$ consisting of the (non-orthogonal) sums of a structure of the family
\begin{align*}
    e_{11}\cdot e_{11}=ce_{11},
    e_{11}\cdot e_{12}=-e_{12}\cdot e_{22}=ce_{12},
    e_{11}\cdot e_{22}=-e_{22}\cdot e_{22}=ce_{22},\ c\ne 0,
\end{align*}
with a structure of Poisson type.
As a complementary result, we prove in \cref{descr-TP-on-M_n(F)} that the full matrix Lie algebra $M_n(F)$ admits only one non-trivial transposed Poisson structure $e_{ii}\cdot e_{jj}=\dl$, $1\le i,j\le n$, and it is of Poisson type. In fact, is isomorphic to the extension by zero of the product $\dl\cdot \dl=\dl$ as observed in \cref{ext-dl.dl=dl}.

	\section{Definitions and preliminaries}\label{prelim}
	
	All the algebras below will be over a field $F$ of characteristic zero and all the linear maps will be $F$-linear, unless otherwise stated. The notation $\gen{S}$ means the $F$-subspace generated by $S$.

	\begin{defn}\label{tpa}
		Let ${\mathfrak L}$ be a vector space equipped with two nonzero bilinear operations $\cdot$ and $[\cdot,\cdot].$
		The triple $({\mathfrak L},\cdot,[\cdot,\cdot])$ is called a \textit{transposed Poisson algebra} if $({\mathfrak L},\cdot)$ is a commutative associative algebra and
		$({\mathfrak L},[\cdot,\cdot])$ is a Lie algebra that satisfies the following compatibility condition
		\begin{align}\label{Trans-Leibniz}
			2z\cdot [x,y]=[z\cdot x,y]+[x,z\cdot y].
		\end{align}
	\end{defn}
	
	Transposed Poisson algebras were first introduced in a paper by Bai, Bai, Guo, and Wu \cite{bai20}.
	
	\begin{defn}\label{tp-structures}
		Let $({\mathfrak L},[\cdot,\cdot])$ be a Lie algebra. A \textit{transposed Poisson algebra structure} on $({\mathfrak L},[\cdot,\cdot])$ is a commutative associative multiplication $\cdot$ on $\mathfrak L$ which makes $({\mathfrak L},\cdot,[\cdot,\cdot])$ a transposed Poisson algebra.
	\end{defn}

	\begin{defn}\label{12der}
		Let $({\mathfrak L}, [\cdot,\cdot])$ be an algebra and $\varphi:\mathfrak L\to\mathfrak L$ a linear map.
		Then $\varphi$ is a \textit{$\frac{1}{2}$-derivation} if it satisfies
		\begin{align}\label{vf(xy)=half(vf(x)y+xvf(y))}
			\varphi \big([x,y]\big)= \frac{1}{2} \big([\varphi(x),y]+ [x, \varphi(y)] \big).
		\end{align}
	\end{defn}
	Observe that $\frac{1}{2}$-derivations are a particular case of $\delta$-derivations introduced by Filippov in \cite{fil1}. The space of all $\frac{1}{2}$-derivations of an algebra $\mathfrak L$ will be denoted by $\Dl(\mathfrak L).$ It is easy to see from \cref{vf(xy)=half(vf(x)y+xvf(y))} that $[\LL,\LL]$ and $\Ann(\LL)$ are invariant under any $\frac 12$-derivation of $\LL$.

	\cref{tpa,12der} immediately imply the following key Lemma.
	\begin{lem}\label{glavlem}
		Let $({\mathfrak L},[\cdot,\cdot])$ be a Lie algebra and $\cdot$ a new binary (bilinear) operation on ${\mathfrak L}$. Then $({\mathfrak L},\cdot,[\cdot,\cdot])$ is a transposed Poisson algebra 
		if and only if $\cdot$ is commutative and associative and for every $z\in{\mathfrak L}$ the multiplication by $z$ in $({\mathfrak L},\cdot)$ is a $\frac{1}{2}$-derivation of $({\mathfrak L}, [\cdot,\cdot]).$
	\end{lem}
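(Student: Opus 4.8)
The plan is to unwind \cref{tpa,12der} and observe that the compatibility condition \eqref{Trans-Leibniz} is, up to the harmless factor $2$, nothing but the defining identity \eqref{vf(xy)=half(vf(x)y+xvf(y))} of a $\frac12$-derivation applied to the left multiplication operators of $(\mathfrak L,\cdot)$. So the statement is really a reformulation, and the proof should be only a few lines.

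First I would fix notation: for $z\in\mathfrak L$ let $\varphi_z\colon\mathfrak L\to\mathfrak L$, $\varphi_z(x)=z\cdot x$, which is a well-defined linear map because $\cdot$ is bilinear. With this notation the right-hand side of \eqref{Trans-Leibniz} is $[\varphi_z(x),y]+[x,\varphi_z(y)]$ and the left-hand side is $2\varphi_z([x,y])$. Hence \eqref{Trans-Leibniz} holding for all $x,y,z\in\mathfrak L$ is literally equivalent to saying that, for every fixed $z\in\mathfrak L$, one has $\varphi_z([x,y])=\frac12\big([\varphi_z(x),y]+[x,\varphi_z(y)]\big)$ for all $x,y\in\mathfrak L$, i.e. that each $\varphi_z$ is a $\frac12$-derivation of $(\mathfrak L,[\cdot,\cdot])$. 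This equivalence is the whole content of the lemma.

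Then I would assemble the two implications from this. For the forward direction, if $(\mathfrak L,\cdot,[\cdot,\cdot])$ is a transposed Poisson algebra, then by \cref{tpa} the operation $\cdot$ is commutative and associative and \eqref{Trans-Leibniz} holds, so by the observation above every $\varphi_z$ is a $\frac12$-derivation. Conversely, if $\cdot$ is commutative and associative and every multiplication $\varphi_z$ is a $\frac12$-derivation, then $(\mathfrak L,\cdot)$ is a commutative associative algebra, $(\mathfrak L,[\cdot,\cdot])$ is a Lie algebra by hypothesis, and \eqref{Trans-Leibniz} holds again by the observation, so $(\mathfrak L,\cdot,[\cdot,\cdot])$ is a transposed Poisson algebra. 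There is essentially no obstacle here; the only points requiring a modicum of care are the bookkeeping of the factor $2$ and the (vacuous) nonzero-bracket clause in \cref{tpa}: since $[\cdot,\cdot]$ is the fixed Lie bracket, that clause imposes no condition beyond $\cdot$ being nonzero, which is subsumed in the standing convention that a transposed Poisson structure is given by a nonzero multiplication.
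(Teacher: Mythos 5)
Your proof is correct and is exactly the argument the paper intends: the lemma is stated as an immediate consequence of \cref{tpa,12der}, and your identification of \cref{Trans-Leibniz} with the $\frac12$-derivation identity for each multiplication operator $\varphi_z$ is precisely that unwinding. No issues.
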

	
	The basic example of a $\frac{1}{2}$-derivation is the multiplication by a field element.
	Such $\frac{1}{2}$-derivations will be called \textit{trivial}. 
	
	\begin{thrm}\label{princth}
		Let ${\mathfrak L}$ be a Lie algebra without non-trivial $\frac{1}{2}$-derivations.
		Then all transposed Poisson algebra structures on ${\mathfrak L}$ are trivial.
	\end{thrm}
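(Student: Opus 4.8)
The plan is to reduce everything to \cref{glavlem} and then squeeze out the conclusion from commutativity of the associative product. Let $\cdot$ be an arbitrary transposed Poisson algebra structure on $\LL$. By \cref{glavlem}, for each fixed $z\in\LL$ the map $L_z\colon\LL\to\LL$, $L_z(x)=z\cdot x$, is a $\frac12$-derivation of $(\LL,[\cdot,\cdot])$. Since by hypothesis $\LL$ has no non-trivial $\frac12$-derivations, each $L_z$ is trivial, i.e.\ there is a scalar $\lb(z)\in F$ such that $z\cdot x=\lb(z)\,x$ for all $x\in\LL$; linearity of $\cdot$ in its first argument makes $\lb\colon\LL\to F$ a linear functional, though we shall not even need this.

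Next I would invoke commutativity of $\cdot$: for all $x,z\in\LL$ we have $\lb(z)\,x=z\cdot x=x\cdot z=\lb(x)\,z$. Whenever $x$ and $z$ are linearly independent this forces $\lb(x)=\lb(z)=0$. If $\dim\LL\ge 2$, then every element lies in some linearly independent pair (and $\lb(0)=0$ trivially), so $\lb\equiv 0$ and hence $z\cdot x=0$ for all $x,z$; if $\dim\LL=1$ the Lie bracket vanishes and there is no transposed Poisson algebra structure in the sense of \cref{tpa} to begin with. In either case the only surviving multiplication is the zero one, which is precisely the statement that every transposed Poisson structure on $\LL$ is trivial.

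There is no genuine obstacle here: the whole point is that the $\frac12$-derivation condition on its own already pins every left multiplication down to a scalar operator, after which commutativity of $\cdot$ annihilates the scalar; associativity of $\cdot$ plays no role. The only thing worth flagging is the harmless degenerate case $\dim\LL\le 1$, dealt with above by the observation that such a Lie algebra has zero bracket and therefore admits no transposed Poisson algebra structure at all.
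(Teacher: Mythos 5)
Your proof is correct and follows what is essentially the standard argument for this result (the paper itself states \cref{princth} without proof, as it comes from the framework of \cite{FKL}): by \cref{glavlem} each left multiplication is a $\frac{1}{2}$-derivation, hence a scalar operator $\lb(z)\,\id$, and commutativity of $\cdot$ forces $\lb\equiv 0$ as soon as two linearly independent vectors exist, which is automatic here since \cref{tpa} requires a nonzero bracket (so $\dim\LL\ge 2$). Your handling of the degenerate case $\dim\LL\le 1$ is consistent with the paper's definitions, so there is nothing to add.
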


 Another well-known class of $\frac{1}{2}$-derivations of $(\LL,[\cdot,\cdot])$ is formed by linear maps $\LL\to \Ann(\LL)$ annihilating $[\LL,\LL]$. If $(\LL,[\cdot,\cdot])$ is a Lie algebra, such $\frac{1}{2}$-derivations of $\LL$ correspond to the following transposed Poisson structures on $\LL$. Denote by $Z(\LL)$ the \textit{center} of $\LL$ and fix a complement $V$ of $[\LL,\LL]$ in $\LL$. Then any commutative associative product $*:V\times V\to Z(\LL)$ defines a transposed Poisson algebra structure $\cdot$ on $\LL$ by means of
	\begin{align}\label{(a_1+a_2)-times-(b_1+b_2)}
		(a_1+a_2)\cdot(b_1+b_2)=a_1*b_1,
	\end{align}
	where $a_1,b_1\in V$ and $a_2,b_2\in [\LL,\LL]$. Indeed, the right-hand side of \cref{Trans-Leibniz} is zero, because $z\cdot x,z\cdot y\in Z(\LL)$, and the left-hand side of \cref{Trans-Leibniz} is zero by \cref{(a_1+a_2)-times-(b_1+b_2)}, because $[x,y]\in[\LL,\LL]$. We say that $\cdot$ is \textit{the extension by zero} of $*$. Observe that $\cdot$ is, at the same time, a usual Poisson structure on $(\LL,[\cdot,\cdot])$. Thus, such transposed Poisson structures are said to be \textit{of Poisson type}.

 Given two transposed Poisson structures $\cdot_1$ and $\cdot_2$ on $(\LL,[\cdot,\cdot])$, their \textit{sum} $*$ defined by 
 \begin{align*}
     a*b=a\cdot_1 b+a\cdot_2 b
 \end{align*}
 is clearly commutative and satisfies \cref{Trans-Leibniz}. In general, $*$ may be non-associative, but it is associative, if
 \begin{align*}
     \LL\cdot_1\LL\sst\Ann(\LL,\cdot_2)\text{ and }\LL\cdot_2\LL\sst\Ann(\LL,\cdot_1).
 \end{align*}
 In this case we say that $\cdot_1$ and $\cdot_2$ are \textit{orthogonal}, and $*$ is the \textit{orthogonal} sum of $\cdot_1$ and $\cdot_2$. 

	
	 Let $\cdot$ be a transposed Poisson algebra structure on a Lie algebra $({\mathfrak L}, [\cdot,\cdot])$. 
	 Then any automorphism $\phi$ of $({\mathfrak L}, [\cdot,\cdot])$ induces the transposed Poisson algebra structure $*$ on $({\mathfrak L}, [\cdot,\cdot])$ given by
	 \begin{align*}
		 x*y=\phi\big(\phi^{-1}(x)\cdot\phi^{-1}(x)\big),\ \ x,y\in{\mathfrak L}.
		 \end{align*}
	 Clearly, $\phi$ is an isomorphism of transposed Poisson algebras $({\mathfrak L},\cdot,[\cdot,\cdot])$ and $({\mathfrak L},*,[\cdot,\cdot])$.

	\section{Transposed Poisson structures on the upper triangular matrix Lie algebra}\label{sec-TP-on-T_n}

\subsection{Upper triangular matrix algebra}\label{sec-gen-T-n}
	
Let $n$ be a positive integer. Denote by $T_n(F)$ the algebra of upper triangular $n\times n$ matrices over $F$. The usual matrix product on $T_n(F)$ will be denoted by $\cdot$ or just by the concatenation, and the commutator product by $[a,b]=ab-ba$. Following the terminology of incidence algebras, we denote the identity matrix by $\dl$ and, given $a\in T_n(F)$ and $1\le i,j\le n$, we write $a(i,j)$ for the $(i,j)$-entry of $a$. The algebra $T_n(F)$ has the natural basis formed by the matrix units $e_{ij}$, $1\le i\le j\le n$, where $e_{ij}(k,l)=\dl(i,k)\dl(j,l)$. It is well-known that
\begin{align*}
    Z(T_n(F))=\gen{\dl}\text{ and }[T_n(F),T_n(F)]=\gen{e_{ij}\mid 1\le i<j\le n}\text{ for all }n\ge 1.     
\end{align*}
Moreover, 
\begin{align}\label{Z([T_n(F)_T_n(F)])=<e_1n>}
    Z([T_n(F),T_n(F)])=\gen{e_{1n}}\text{ for all }n>1.
\end{align}
Clearly, for all $a\in T_n(F)$ and $1\le i\le j\le n$  we have
\begin{align*}
	e_{ij}a=\sum_{j\le k}a(j,k)e_{ik},\ \ ae_{ij}=\sum_{l\le i}a(l,i)e_{lj},
\end{align*}
so
\begin{align*}
	e_{ij}ae_{kl}=
	\begin{cases}
		a(j,k)e_{il}, & j\le k,\\
		0, & j>k.
	\end{cases}
\end{align*}
These equalities will be used numerous times throughout the text without any reference.

\subsection{$\frac{1}{2}$-derivations of the upper triangular matrix Lie algebra}
 
We denote by $\Dl(T_n(F))$ the space of $\frac 12$-derivations of the Lie algebra $(T_n(F),[\cdot,\cdot])$.

\begin{lem}\label{basic-properties-of-vf(e_ij)}
	Let $\vf\in\Dl(T_n(F))$. Then for all $1\le i<j\le n$ we have
	\begin{enumerate}
		\item\label{vf(e_ij)e_ii} $\vf(e_{ij})e_{ii}=0;$
		\item\label{e_ij.vf(e_ii)} $e_{ij}\vf(e_{ii})=\vf(e_{ii})(j,j)e_{ij};$
		\item\label{vf(e_ii)(i_j)=-vf(e_jj)(i_j)} $\vf(e_{ii})(i,j)=-\vf(e_{jj})(i,j);$
		\item\label{vf(e_ii)e_ij=vf(e_ii)(i_i)e_ij} $\vf(e_{ii})e_{ij}=\vf(e_{ii})(i,i)e_{ij}$.
	\end{enumerate}
\end{lem}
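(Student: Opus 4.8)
The plan is to feed the elementary commutator relations among matrix units into the defining identity \eqref{vf(xy)=half(vf(x)y+xvf(y))} for $\vf\in\Dl(T_n(F))$, and then to extract scalar equations among the entries of $\vf(e_{ij})$, $\vf(e_{ii})$ and $\vf(e_{jj})$ by multiplying the resulting matrix identities by suitable diagonal units on the left and on the right. The only relations I shall need are, for $i<j$,
\[
[e_{ii},e_{ij}]=[e_{ij},e_{jj}]=e_{ij}\qquad\text{and}\qquad [e_{aa},e_{bb}]=0,
\]
together with the matrix unit multiplication rules recalled above and the fact, noted just after \cref{12der}, that $[T_n(F),T_n(F)]$ is invariant under $\vf$; after that the argument is index bookkeeping.

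For (i) I would substitute $e_{ij}=[e_{ii},e_{ij}]$ into \eqref{vf(xy)=half(vf(x)y+xvf(y))} and multiply the identity $\vf(e_{ij})=\frac12[\vf(e_{ii}),e_{ij}]+\frac12[e_{ii},\vf(e_{ij})]$ by $e_{ii}$ on the right. Since $e_{ij}e_{ii}=0$ and $e_{ij}\vf(e_{ii})e_{ii}=0$ (the latter because $s\le i<j$ for every matrix unit $e_{si}$ occurring in $\vf(e_{ii})e_{ii}$), the first bracket contributes nothing, and one is left with $3\,\vf(e_{ij})e_{ii}=e_{ii}\vf(e_{ij})e_{ii}=\vf(e_{ij})(i,i)e_{ii}$. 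As $e_{ij}\in[T_n(F),T_n(F)]$, its image $\vf(e_{ij})$ has zero diagonal, so the right-hand side vanishes and $\vf(e_{ij})e_{ii}=0$.

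For (ii) and (iii) I would start from \eqref{vf(xy)=half(vf(x)y+xvf(y))} with $x=e_{aa}$, $y=e_{bb}$: as $[e_{aa},e_{bb}]=0$ this reads $[\vf(e_{aa}),e_{bb}]+[e_{aa},\vf(e_{bb})]=0$, i.e. $\vf(e_{aa})e_{bb}-e_{bb}\vf(e_{aa})+e_{aa}\vf(e_{bb})-\vf(e_{bb})e_{aa}=0$. Multiplying this by $e_{pp}$ on the left and $e_{qq}$ on the right, the first summand survives only if $b=q$, the second only if $p=b=q$, the third only if $p=a$, and the fourth only if $a=q$; hence, if $b=q$, $p<q$ and $a\notin\{p,q\}$, only the first one remains, giving $\vf(e_{aa})(p,q)=0$. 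Applying this with $a=i$ and $(p,q)=(j,k)$ for $i<j<k$ yields $\vf(e_{ii})(j,k)=0$ for all $k>j$, which is (ii); applying it with $a=p=i$ and $b=q=j$ for $i<j$ keeps the first and third summands and gives $\vf(e_{ii})(i,j)+\vf(e_{jj})(i,j)=0$, which is (iii). The same computation with $a=j$ and $(p,q)=(l,i)$ for $l<i<j$ yields the auxiliary identity $\vf(e_{jj})(l,i)=0$, used in the next step.

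Finally, (iv) is equivalent to $\vf(e_{ii})(l,i)=0$ for all $l<i$ (and is vacuous when $i=1$). I would substitute both $e_{ij}=[e_{ii},e_{ij}]$ and $e_{ij}=[e_{ij},e_{jj}]$ into \eqref{vf(xy)=half(vf(x)y+xvf(y))} and multiply each resulting identity by $e_{ll}$ on the left and $e_{jj}$ on the right; the index restrictions annihilate all but one term on each side and leave $\vf(e_{ij})(l,j)=\frac12\vf(e_{ii})(l,i)$ in the first case and $\vf(e_{ij})(l,j)=-\vf(e_{jj})(l,i)$ in the second, so together with $\vf(e_{jj})(l,i)=0$ from the previous paragraph we get $\vf(e_{ii})(l,i)=0$. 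I expect (iv) to be the main obstacle: unlike (i)--(iii) it does not come from a single substitution, but genuinely needs both relations $[e_{ii},e_{ij}]=e_{ij}$ and $[e_{ij},e_{jj}]=e_{ij}$ combined with the case $a\notin\{p,q\}$ of the $[e_{aa},e_{bb}]=0$ identity; throughout one also has to keep track of the degenerate cases $i=1$ and $j=n$, in which some of the four statements hold automatically.
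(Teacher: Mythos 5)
Your proposal is correct and takes essentially the same approach as the paper: you feed the relations $e_{ij}=[e_{ii},e_{ij}]=[e_{ij},e_{jj}]$ and $[e_{aa},e_{bb}]=0$ into the $\frac 12$-derivation identity and then extract entry equations by multiplying by diagonal matrix units on both sides, exactly as in the paper's proof. The only (harmless) variations are in items (i) and (iv): for (i) you use $e_{ij}=[e_{ii},e_{ij}]$ plus the invariance of $[T_n(F),T_n(F)]$ under $\vf$ and a division by $3$, where the paper gets the vanishing at once from $e_{ij}=[e_{ij},e_{jj}]$, and for (iv) you replace the paper's combination of $[e_{ll},e_{ij}]=0$ with item (iii) by the relation $e_{ij}=[e_{ij},e_{jj}]$ together with the auxiliary identity $\vf(e_{jj})(l,i)=0$, both of which are valid.
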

\begin{proof}
	\textit{\cref{vf(e_ij)e_ii}}. Since $e_{ij}=[e_{ij},e_{jj}]$, then
	\begin{align}\label{2vf(e_ij)=[vf(e_ij)_e_jj]+[e_ij_vf(e_jj)]}
		2\vf(e_{ij})=[\vf(e_{ij}),e_{jj}]+[e_{ij},\vf(e_{jj})]=\vf(e_{ij})e_{jj}-e_{jj}\vf(e_{ij})+e_{ij}\vf(e_{jj})-\vf(e_{jj})e_{ij}.
	\end{align}
	Multiplying \cref{2vf(e_ij)=[vf(e_ij)_e_jj]+[e_ij_vf(e_jj)]} by $e_{ii}$ on the right (under $\cdot$), we obtain the desired equality.
	
	\textit{\cref{e_ij.vf(e_ii)}}. Apply $\vf$ to $[e_{ii},e_{jj}]=0$:
	\begin{align}\label{0=vf(e_ii)e_jj-e_jj.vf(e_ii)+e_ii.vf(e_jj)-vf(e_jj)e_ii}
		0=\vf([e_{ii},e_{jj}])=\vf(e_{ii})e_{jj}-e_{jj}\vf(e_{ii})+e_{ii}\vf(e_{jj})-\vf(e_{jj})e_{ii}.
	\end{align} 
	If there exists $k>j$, then multiplying \cref{0=vf(e_ii)e_jj-e_jj.vf(e_ii)+e_ii.vf(e_jj)-vf(e_jj)e_ii} by $e_{jj}$ on the left and by $e_{kk}$ on the right, we obtain
	\begin{align}\label{vf(e_ii)(j_k)=0-for-i-ne-j_k}
		\vf(e_{ii})(j,k)=0\text{ for }j<k.
	\end{align}
	It follows that
	\begin{align*}
		e_{ij}\vf(e_{ii})=\sum_{j\le k}\vf(e_{ii})(j,k)e_{ik}=\vf(e_{ii})(j,j)e_{ij},
	\end{align*}
	as needed.
	
	\textit{\cref{vf(e_ii)(i_j)=-vf(e_jj)(i_j)}}. This follows by multiplying \cref{0=vf(e_ii)e_jj-e_jj.vf(e_ii)+e_ii.vf(e_jj)-vf(e_jj)e_ii} by $e_{ii}$ on the left and by $e_{jj}$ on the right.
	
	\textit{\cref{vf(e_ii)e_ij=vf(e_ii)(i_i)e_ij}}. Applying $\vf$ to $e_{ij}=[e_{ii},e_{ij}]$, we have
	\begin{align}\label{2vf(e_ij)=[vf(e_ii)_e_ij]+[e_ii_vf(e_ij)]}
		2\vf(e_{ij})=[\vf(e_{ii}),e_{ij}]+[e_{ii},\vf(e_{ij})]=\vf(e_{ii})e_{ij}-e_{ij}\vf(e_{ii})+e_{ii}\vf(e_{ij})-\vf(e_{ij})e_{ii}.
	\end{align}
	If there exists $l<i$, then the multiplication  of \cref{2vf(e_ij)=[vf(e_ii)_e_ij]+[e_ii_vf(e_ij)]} by $e_{ll}$ on the left and by $e_{jj}$ on the right gives
	\begin{align}\label{2.vf(e_ij)(l_j)=vf(e_ii)(l_i)}
		2\vf(e_{ij})(l,j)=\vf(e_{ii})(l,i).
	\end{align}
	On the other hand, $[e_{ll},e_{ij}]=0$, so
	\begin{align*}
		0=\vf(e_{ll})e_{ij}-e_{ij}\vf(e_{ll})+e_{ll}\vf(e_{ij})-\vf(e_{ij})e_{ll}.
	\end{align*}
	Multiplying this by $e_{ll}$ on the left and by $e_{jj}$ on the right, we come to
	\begin{align}\label{vf(e_ij)(l_j)=-vf(e_ll)(l_i)}
		\vf(e_{ij})(l,j)=-\vf(e_{ll})(l,i).
	\end{align}
	The latter is $\vf(e_{ii})(l,i)$ by item \cref{vf(e_ii)(i_j)=-vf(e_jj)(i_j)}. Thus, \cref{2.vf(e_ij)(l_j)=vf(e_ii)(l_i),vf(e_ij)(l_j)=-vf(e_ll)(l_i)} result in 
	\begin{align}\label{vf(e_ii)(l_i)=0}
		\vf(e_{ii})(l,i)=0\text{ for }l<i.
	\end{align}
	Consequently,
	\begin{align*}
		\vf(e_{ii})e_{ij}=\sum_{l\le i}\vf(e_{ii})(l,i)e_{ik}=\vf(e_{ii})(i,i)e_{ij},
	\end{align*}
	as desired.
\end{proof}

\begin{lem}\label{vf(e_ij)-in-<e_ij>}
	Let $\vf\in\Dl(T_n(F))$. Then for all $1\le i<j\le n$ we have
	\begin{align}\label{vf(e_ij)=(vf(e_ii)(i_i)-vf(e_ii)(j_j))e_ij}
		\vf(e_{ij})=(\vf(e_{ii})(i,i)-\vf(e_{ii})(j,j))e_{ij}=(\vf(e_{jj})(j,j)-\vf(e_{jj})(i,i))e_{ij}.
	\end{align}
\end{lem}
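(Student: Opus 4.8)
The plan is to exploit the two ways of writing $e_{ij}$ as a commutator involving a diagonal unit, namely $e_{ij}=[e_{ii},e_{ij}]$ and $e_{ij}=[e_{ij},e_{jj}]$, together with the structural facts already collected in \cref{basic-properties-of-vf(e_ij)}.

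First I would return to the identity \cref{2vf(e_ij)=[vf(e_ii)_e_ij]+[e_ii_vf(e_ij)]}, which reads $2\vf(e_{ij})=\vf(e_{ii})e_{ij}-e_{ij}\vf(e_{ii})+e_{ii}\vf(e_{ij})-\vf(e_{ij})e_{ii}$. Applying \cref{vf(e_ii)e_ij=vf(e_ii)(i_i)e_ij} to the first summand, \cref{e_ij.vf(e_ii)} to the second, and \cref{vf(e_ij)e_ii} to the last, this collapses to
\[
2\vf(e_{ij})-e_{ii}\vf(e_{ij})=\bigl(\vf(e_{ii})(i,i)-\vf(e_{ii})(j,j)\bigr)e_{ij}.
\]
Now I would expand $\vf(e_{ij})=\sum_{p\le q}\vf(e_{ij})(p,q)e_{pq}$ and note that $e_{ii}\vf(e_{ij})=\sum_{q\ge i}\vf(e_{ij})(i,q)e_{iq}$ only involves basis units with first index $i$. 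Comparing coefficients of $e_{pq}$ on both sides: for $p\ne i$ one gets $2\vf(e_{ij})(p,q)=0$; for $p=i$, $q\ne j$ one gets $\vf(e_{ij})(i,q)=0$; and for $(p,q)=(i,j)$ one gets $\vf(e_{ij})(i,j)=\vf(e_{ii})(i,i)-\vf(e_{ii})(j,j)$. This already yields $\vf(e_{ij})=\mu\,e_{ij}$ with $\mu=\vf(e_{ii})(i,i)-\vf(e_{ii})(j,j)$, which is the first of the two claimed equalities.

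For the second equality I would feed $\vf(e_{ij})=\mu\,e_{ij}$ back into \cref{2vf(e_ij)=[vf(e_ij)_e_jj]+[e_ij_vf(e_jj)]}. Using $e_{ij}e_{jj}=e_{ij}$ and $e_{jj}e_{ij}=0$, the two terms coming from $\vf(e_{ij})$ contribute exactly $\mu e_{ij}$, so the identity simplifies to $\mu\,e_{ij}=e_{ij}\vf(e_{jj})-\vf(e_{jj})e_{ij}$. Writing $e_{ij}\vf(e_{jj})=\sum_{k\ge j}\vf(e_{jj})(j,k)e_{ik}$ and $\vf(e_{jj})e_{ij}=\sum_{\ell\le i}\vf(e_{jj})(\ell,i)e_{\ell j}$, the only contributions to the coefficient of $e_{ij}$ come from $k=j$ in the first sum and $\ell=i$ in the second, which gives $\mu=\vf(e_{jj})(j,j)-\vf(e_{jj})(i,i)$, as required.

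I do not anticipate a serious obstacle; the argument is essentially bookkeeping once \cref{basic-properties-of-vf(e_ij)} is available. The one place that needs care is the coefficient comparison in the second paragraph: one must observe that $e_{ii}\vf(e_{ij})$ lies entirely in the span of the units $e_{iq}$, so the equation forces every entry of $\vf(e_{ij})$ outside row $i$ to vanish, and then that within row $i$ only the $(i,j)$-entry can survive. After $\vf(e_{ij})$ has been pinned down to a scalar multiple of $e_{ij}$, the second formula is almost immediate, being just a re-reading of the off-diagonal commutator identity \cref{2vf(e_ij)=[vf(e_ij)_e_jj]+[e_ij_vf(e_jj)]}.
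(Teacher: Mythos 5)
Your proposal is correct and follows essentially the same route as the paper: the identity $2\vf(e_{ij})=[\vf(e_{ii}),e_{ij}]+[e_{ii},\vf(e_{ij})]$ combined with items \cref{vf(e_ij)e_ii,e_ij.vf(e_ii),vf(e_ii)e_ij=vf(e_ii)(i_i)e_ij} of \cref{basic-properties-of-vf(e_ij)} pins down $\vf(e_{ij})$ to a multiple of $e_{ij}$, and then \cref{2vf(e_ij)=[vf(e_ij)_e_jj]+[e_ij_vf(e_jj)]} gives the second expression for the scalar. The only cosmetic difference is that you handle the leftover term $e_{ii}\vf(e_{ij})$ by comparing matrix-unit coefficients, whereas the paper multiplies by $e_{ii}$ on the left and substitutes back; both are equivalent bookkeeping.
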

\begin{proof}
	By \cref{2vf(e_ij)=[vf(e_ii)_e_ij]+[e_ii_vf(e_ij)]} and \cref{vf(e_ij)e_ii,e_ij.vf(e_ii),vf(e_ii)e_ij=vf(e_ii)(i_i)e_ij} of \cref{basic-properties-of-vf(e_ij)} we have
	\begin{align}\label{2vf(e_ij)=(vf(e_ii)(i_i)-vf(e_ii)(j_j))e_ij+e_ii.vf(e_ij)}
		2\vf(e_{ij})=(\vf(e_{ii})(i,i)-\vf(e_{ii})(j,j))e_{ij}+e_{ii}\vf(e_{ij}).
	\end{align}
	Multiplying \cref{2vf(e_ij)=(vf(e_ii)(i_i)-vf(e_ii)(j_j))e_ij+e_ii.vf(e_ij)} by $e_{ii}$ on the left, we get
	\begin{align*}
		2e_{ii}\vf(e_{ij})=(\vf(e_{ii})(i,i)-\vf(e_{ii})(j,j))e_{ij}+e_{ii}\vf(e_{ij}).
	\end{align*}
	whence
	\begin{align*}
		e_{ii}\vf(e_{ij})=(\vf(e_{ii})(i,i)-\vf(e_{ii})(j,j))e_{ij}.
	\end{align*}
	Substituting this into \cref{2vf(e_ij)=(vf(e_ii)(i_i)-vf(e_ii)(j_j))e_ij+e_ii.vf(e_ij)} and dividing by $2$ (recall that $\ch(F)=0$), we prove the first equality of \cref{vf(e_ij)=(vf(e_ii)(i_i)-vf(e_ii)(j_j))e_ij}. The second one is obtained from \cref{2vf(e_ij)=[vf(e_ij)_e_jj]+[e_ij_vf(e_jj)]} using $\vf(e_{ij})\in\gen{e_{ij}}$ (which holds by the first equality of \cref{vf(e_ij)=(vf(e_ii)(i_i)-vf(e_ii)(j_j))e_ij}).
\end{proof}	
	
\begin{lem}\label{vf(e_ij)(i_j)=const}
	Let $\vf\in\Dl(T_n(F))$. 
	\begin{enumerate}
		\item\label{vf(e_ii)(j_j)=vf(e_ii)(k_k)} For all $1\le i\le n$ and $1\le j\le k\le n$ with $i\not\in\{j,k\}$ we have $\vf(e_{ii})(j,j)=\vf(e_{ii})(k,k)$.
		\item\label{vf(e_ij)(i_j)=vf(e_ij)(1_n)} For all $1\le i<j\le n$ we have $\vf(e_{ij})(i,j)=\vf(e_{1n})(1,n)$.
		\item\label{vf(e_ii)(j_k)=0} For all $1\le i\le n$ and $1\le j<k\le n$ we have $\vf(e_{ii})(j,k)=0$, unless $i\in\{1,n\}$ and $(j,k)=(1,n)$.
	\end{enumerate}
\end{lem}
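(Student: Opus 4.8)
My plan is to establish the three items in order, using \cref{basic-properties-of-vf(e_ij),vf(e_ij)-in-<e_ij>} throughout; in particular \cref{vf(e_ij)-in-<e_ij>} lets me write $\vf(e_{ij})=c_{ij}e_{ij}$ with $c_{ij}\in F$ for each $i<j$. For \cref{vf(e_ii)(j_j)=vf(e_ii)(k_k)} I may assume $j<k$, the case $j=k$ being trivial: since $i\notin\{j,k\}$ we have $[e_{ii},e_{jk}]=0$, so applying $\vf$ and using $[e_{ii},\vf(e_{jk})]=c_{jk}[e_{ii},e_{jk}]=0$ gives $\vf(e_{ii})e_{jk}=e_{jk}\vf(e_{ii})$; comparing the $(j,k)$-entries of the two sides yields $\vf(e_{ii})(j,j)=\vf(e_{ii})(k,k)$. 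Then \cref{vf(e_ij)(i_j)=vf(e_ij)(1_n)} follows formally: by \cref{vf(e_ij)-in-<e_ij>}, $c_{ij}=\vf(e_{ii})(i,i)-\vf(e_{ii})(j,j)=\vf(e_{jj})(j,j)-\vf(e_{jj})(i,i)$, and by \cref{vf(e_ii)(j_j)=vf(e_ii)(k_k)} the number $\vf(e_{ll})(m,m)$ is independent of $m\ne l$; calling it $b_l$, we get $c_{ij}=\vf(e_{ii})(i,i)-b_i=\vf(e_{jj})(j,j)-b_j$ for all $i<j$, whence $\vf(e_{ll})(l,l)-b_l$ does not depend on $l$ and equals $c_{1n}$.

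The substance is \cref{vf(e_ii)(j_k)=0}. Fix $i$, and for an arbitrary $j\ne i$ apply $\vf$ to $[e_{jj},e_{ii}]=0$, obtaining $[e_{jj},\vf(e_{ii})]=-[\vf(e_{jj}),e_{ii}]$. The left-hand side is supported on the $j$-th row and column of the matrix, the right-hand side on the $i$-th row and column, and since $i\ne j$ these supports meet only at the positions $(i,j)$ and $(j,i)$. Comparing entries off those two positions gives $\vf(e_{ii})(j,q)=0$ for $q>j$, $q\ne i$, and $\vf(e_{ii})(p,j)=0$ for $p<j$, $p\ne i$; letting $j$ run over all indices $\ne i$ then yields $\vf(e_{ii})(p,q)=0$ for every $p<q$ with $i\notin\{p,q\}$. (The two exceptional positions merely reproduce \cref{vf(e_ii)(i_j)=-vf(e_jj)(i_j)} of \cref{basic-properties-of-vf(e_ij)}.)

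It remains to control the entries $\vf(e_{ii})(p,i)$, $p<i$, and $\vf(e_{ii})(i,q)$, $q>i$. The former vanish by \cref{vf(e_ii)(l_i)=0} whenever $1<i<n$. For the latter, when $i>1$ I apply $\vf$ to the identity $[e_{1i},e_{ii}]=e_{1i}$: since $\vf(e_{1i})=c_{1i}e_{1i}$ this reduces to $[e_{1i},\vf(e_{ii})]=c_{1i}e_{1i}$, and reading off the coefficient of $e_{1q}$ for $q>i$ forces $\vf(e_{ii})(i,q)=0$. Hence $\vf(e_{ii})$ is diagonal whenever $1<i<n$, which is \cref{vf(e_ii)(j_k)=0} in that range. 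For $i=1$ the steps above leave undetermined only the entries $\vf(e_{11})(1,q)$, $q>1$, and for $i=n$ only the entries $\vf(e_{nn})(p,n)$, $p<n$; but \cref{vf(e_ii)(i_j)=-vf(e_jj)(i_j)} of \cref{basic-properties-of-vf(e_ij)} gives $\vf(e_{11})(1,q)=-\vf(e_{qq})(1,q)$ and $\vf(e_{nn})(p,n)=-\vf(e_{pp})(p,n)$, and the right-hand sides vanish for $1<q<n$ and $1<p<n$ respectively, since $\vf(e_{qq})$ and $\vf(e_{pp})$ are then diagonal by the interior case already proved. Only the corner entry survives, which is precisely what \cref{vf(e_ii)(j_k)=0} allows.

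The main obstacle is \cref{vf(e_ii)(j_k)=0}: the relations $[e_{ii},\cdot]$ never expose the $(p,q)$-entry of $\vf(e_{ii})$ directly, so one must extract the entries away from row and column $i$ from the support bookkeeping for $[e_{jj},e_{ii}]=0$, annihilate the remaining row-$i$ entries through the auxiliary identity $[e_{1i},e_{ii}]=e_{1i}$, and only afterwards recover the boundary cases $i\in\{1,n\}$ from the interior ones via \cref{vf(e_ii)(i_j)=-vf(e_jj)(i_j)} of \cref{basic-properties-of-vf(e_ij)}. Keeping track of which steps need $i\ne1$, $i\ne n$, or $n\ge3$ (so that the interior case is non-empty) is the delicate point.
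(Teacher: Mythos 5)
Your proof is correct and follows essentially the same route as the paper: items (i)--(ii) are the same computations, and for (iii) you use the same ingredients (the relation $[e_{jj},e_{ii}]=0$ for the entries off row/column $i$, the identity $e_{1i}=[e_{1i},e_{ii}]$ in place of the paper's \cref{2vf(e_ij)=[vf(e_ij)_e_jj]+[e_ij_vf(e_jj)]} for the row-$i$ entries, \cref{vf(e_ii)(l_i)=0} for the column-$i$ entries, and \cref{basic-properties-of-vf(e_ij)}\cref{vf(e_ii)(i_j)=-vf(e_jj)(i_j)} to reduce the boundary cases $i\in\{1,n\}$ to the interior ones). The only differences are presentational (support bookkeeping instead of multiplying by matrix units, and the fixed choice $l=1$), and your tracking of the conditions $i\ne 1$, $i\ne n$ is accurate, the $n=2$ case being vacuous.
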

\begin{proof}
	\textit{\cref{vf(e_ii)(j_j)=vf(e_ii)(k_k)}}. Applying $\vf$ to $[e_{ii},e_{jk}]=0$, we get
	\begin{align*}
		0=\vf(e_{ii})e_{jk}-e_{jk}\vf(e_{ii})+e_{ii}\vf(e_{jk})-\vf(e_{jk})e_{ii}.
	\end{align*}
Then the desired equality is obtained by the left multiplication by $e_{jj}$ and right multiplication by $e_{kk}$.

\textit{\cref{vf(e_ij)(i_j)=vf(e_ij)(1_n)}}. Since $i<j$, then $i\ne n$, so $i\not\in\{j,n\}$ and $n\not\in\{1,i\}$. We use item \cref{vf(e_ii)(j_j)=vf(e_ii)(k_k)} and \cref{vf(e_ij)=(vf(e_ii)(i_i)-vf(e_ii)(j_j))e_ij}:
\begin{align*}
	\vf(e_{ij})(i,j)&=\vf(e_{ii})(i,i)-\vf(e_{ii})(j,j)=\vf(e_{ii})(i,i)-\vf(e_{ii})(n,n)\\
	&=\vf(e_{nn})(n,n)-\vf(e_{nn})(i,i)=\vf(e_{nn})(n,n)-\vf(e_{nn})(1,1)=\vf(e_{1n})(1,n).
\end{align*}

\textit{\cref{vf(e_ii)(j_k)=0}}. We have already seen in \cref{vf(e_ii)(j_k)=0-for-i-ne-j_k} that $\vf(e_{ii})(j,k)=0$ for $j<k$ and $i\not\in\{j,k\}$ (although it is required that $i<j$ in the statement of \cref{basic-properties-of-vf(e_ij)}, the proof of \cref{vf(e_ii)(j_k)=0-for-i-ne-j_k} uses only $j<k$ and $i\not\in\{j,k\}$). Moreover, $\vf(e_{ii})(l,i)=0$ for all $l<i$ by \cref{vf(e_ii)(l_i)=0} under the assumption that there exists $j>i$ (i.e., $i<n$). Similarly, \cref{2vf(e_ij)=[vf(e_ij)_e_jj]+[e_ij_vf(e_jj)],vf(e_ij)-in-<e_ij>} imply that $\vf(e_{jj})(j,k)=2\vf(e_{ij})(j,k)=0$ for all $i<j<k$, so $\vf(e_{ii})(i,k)=0$ for all $1<i<k$. It remains to prove that
\begin{align*}
	\vf(e_{11})(1,k)=\vf(e_{nn})(j,n)=0\text{ for }1<j,k<n.
\end{align*}
But \cref{basic-properties-of-vf(e_ij)}\cref{vf(e_ii)(i_j)=-vf(e_jj)(i_j)} shows that $\vf(e_{11})(1,k)=-\vf(e_{kk})(1,k)$, which is proved to be $0$ for $1<k<n$. Similarly, $\vf(e_{nn})(j,n)=-\vf(e_{jj})(j,n)=0$ for $1<j<n$.
\end{proof}

\begin{lem}\label{af-is-halfder}
	Let $n>1$. Then the linear map $\af:T_n(F)\to T_n(F)$ given by
	\begin{align*}
		\af(e_{ij})=
		\begin{cases}
			e_{1n}, & (i,j)=(1,1),\\
			-e_{1n}, & (i,j)=(n,n),\\
			0, & (i,j)\not\in\{(1,1),(n,n)\},
		\end{cases}
	\end{align*}
is a $\frac 12$-derivation of $T_n(F)$.
\end{lem}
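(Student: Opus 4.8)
The plan is to replace the case-by-case definition of $\af$ by the uniform formula $\af(a)=(a(1,1)-a(n,n))e_{1n}$, valid for every $a\in T_n(F)$, which is immediate by checking it on the basis $\{e_{ij}\}$. With this in hand, and since both sides of \cref{vf(xy)=half(vf(x)y+xvf(y))} are bilinear in $x,y$, I would simply verify the $\frac12$-derivation identity for two arbitrary matrices $a,b\in T_n(F)$; the explicit formula makes the general computation no harder than a computation on basis elements.

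First I would record two elementary facts about the corner diagonal entries in $T_n(F)$. For upper triangular $a,b$ one has $(ab)(1,1)=\sum_k a(1,k)b(k,1)=a(1,1)b(1,1)$ and $(ab)(n,n)=\sum_k a(n,k)b(k,n)=a(n,n)b(n,n)$, because only the summand $k=1$, respectively $k=n$, can be nonzero. Hence $[a,b](1,1)=[a,b](n,n)=0$, and therefore the left-hand side
\begin{align*}
\af([a,b])=\big([a,b](1,1)-[a,b](n,n)\big)e_{1n}
\end{align*}
vanishes for all $a,b\in T_n(F)$. Next I would compute the bracket of $e_{1n}$ with an arbitrary upper triangular $b$ using the multiplication rules of \cref{sec-gen-T-n}: $e_{1n}b=\sum_{n\le k}b(n,k)e_{1k}=b(n,n)e_{1n}$ and $be_{1n}=\sum_{l\le 1}b(l,1)e_{ln}=b(1,1)e_{1n}$, so $[e_{1n},b]=(b(n,n)-b(1,1))e_{1n}$, and consequently $[a,e_{1n}]=(a(1,1)-a(n,n))e_{1n}$. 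Writing $u=a(1,1)-a(n,n)$ and $v=b(1,1)-b(n,n)$, the right-hand side of \cref{vf(xy)=half(vf(x)y+xvf(y))} becomes
\begin{align*}
\frac12\big([\af(a),b]+[a,\af(b)]\big)=\frac12\big(u[e_{1n},b]+v[a,e_{1n}]\big)=\frac12\big(-uv+uv\big)e_{1n}=0,
\end{align*}
which matches the left-hand side and proves $\af\in\Dl(T_n(F))$.

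There is no serious obstacle here; the proof is a short direct verification. The only points requiring a little care are the sign bookkeeping in the two brackets $[e_{1n},b]$ and $[a,e_{1n}]$ and the observation that the $(1,1)$- and $(n,n)$-entries of every commutator in $T_n(F)$ vanish. Conceptually, everything collapses because $\af$ takes values in $\gen{e_{1n}}=Z([T_n(F),T_n(F)])$ (see \cref{Z([T_n(F)_T_n(F)])=<e_1n>}) and the two cross-terms on the right-hand side cancel. A reader preferring a finite case analysis can instead note that $\af(e_{ij})\ne 0$ forces $(i,j)\in\{(1,1),(n,n)\}$, reduce to the pairs $(e_{ij},e_{kl})$ with $\{(i,j),(k,l)\}\subseteq\{(1,1),(n,n)\}$, and check \cref{vf(xy)=half(vf(x)y+xvf(y))} in each of these few cases; I would use this route only as a sanity check.
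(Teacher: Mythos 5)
Your proof is correct. The closed-form description $\af(a)=(a(1,1)-a(n,n))e_{1n}$ does agree with the case-by-case definition on the basis (here $n>1$ is used so that $(1,1)\ne(n,n)$), the corner identities $(ab)(1,1)=a(1,1)b(1,1)$ and $(ab)(n,n)=a(n,n)b(n,n)$ for upper triangular matrices indeed force $\af([a,b])=0$, and your computation $[e_{1n},b]=(b(n,n)-b(1,1))e_{1n}$, $[a,e_{1n}]=(a(1,1)-a(n,n))e_{1n}$ gives the cancellation $-uv+uv=0$ on the right-hand side. The route differs from the paper's in execution rather than in substance: the paper verifies \cref{vf(xy)=half(vf(x)y+xvf(y))} only on pairs of matrix units, reducing by anti-commutativity to the two cases $(i,j)=(1,1)$ and $(i,j)=(n,n)$ and checking the few nontrivial subcases by hand, whereas you replace the case analysis by one uniform computation valid for arbitrary $a,b$. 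What your version buys is transparency: it isolates exactly the two structural facts that make the lemma work, namely that the $(1,1)$- and $(n,n)$-entries of any commutator vanish (so the left-hand side is zero) and that the two cross-terms $[\af(a),b]$ and $[a,\af(b)]$ cancel because both are scalar multiples of $e_{1n}$ with opposite coefficients $-uv$ and $uv$; the paper's subcase $e_{1n}-e_{1n}=0$ is precisely this cancellation seen on basis elements. You are also right to note that the cancellation is genuinely needed: $e_{1n}$ spans $Z([T_n(F),T_n(F)])$ but is not central in $T_n(F)$, so $\af$ does not fall into the standard class of $\frac12$-derivations mapping into $\Ann(\LL)$ and annihilating $[\LL,\LL]$, and one cannot dispense with checking the right-hand side.
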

\begin{proof}
	We are going to prove that $\vf=\af$ satisfies \cref{vf(xy)=half(vf(x)y+xvf(y))} for $x=e_{ij}$ and $y=e_{kl}$. Since $\af$ annihilates the commutators of $T_n(F)$, the left-hand side of \cref{vf(xy)=half(vf(x)y+xvf(y))} is always zero. In view of the anti-commutativity of $[\cdot,\cdot]$, we have to deal only with the following $2$ cases.
	
	\textit{Case 1}. $(i,j)=(1,1)$. Then 
	\begin{align}\label{[af(e_11)_e_kl]+[e_11_af(e_kl)]}
		[\af(e_{ij}),e_{kl}]+ [e_{ij}, \af(e_{kl})]=[e_{1n},e_{kl}]+ [e_{11}, \af(e_{kl})].
	\end{align}
Both summands on the right-hand side of \cref{[af(e_11)_e_kl]+[e_11_af(e_kl)]} are zero, unless $(k,l)\in\{(1,1),(n,n)\}$. If $(k,l)=(1,1)$, then the right-hand side of \cref{[af(e_11)_e_kl]+[e_11_af(e_kl)]} is zero due to the anti-commutativity of $[\cdot,\cdot]$. And if $(k,l)=(n,n)$, then $[e_{1n},e_{kl}]+ [e_{11}, \af(e_{kl})]=e_{1n}-e_{1n}=0$.

\textit{Case 2}. $(i,j)=(n,n)$. Then 
\begin{align*}
	[\af(e_{ij}),e_{kl}]+ [e_{ij}, \af(e_{kl})]=-[e_{1n},e_{kl}]+ [e_{nn}, \af(e_{kl})].
\end{align*}
We again have only $2$ non-trivial subcases $(k,l)=(1,1)$ and $(k,l)=(n,n)$ that are similar to the corresponding subcases of Case 1.
\end{proof}

We also introduce the linear maps $\bt_i:T_n(F)\to T_n(F)$, $1\le i\le n$, by
\begin{align*}
	\bt_i(e_{jk})=
	\begin{cases}
		\dl, & (j,k)=(i,i),\\
		0, & (j,k)\ne(i,i).
	\end{cases}
\end{align*}
Obviously, $\bt_i$, $1\le i\le n$, constitute a basis of the space of linear maps $T_n(F)\to Z(T_n(F))$ annihilating $[T_n(F),T_n(F)]$. In particular, $\bt_i\in \Dl(T_n(F))$ for all $1\le i\le n$.

\begin{prop}\label{descr-Dl(T_n(F))}
	Let $n>1$. Then $\Dl(T_n(F))=\gen{\id,\af}\oplus\gen{\bt_i\mid 1\le i\le n}$.
\end{prop}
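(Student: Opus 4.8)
The plan is to prove $\Dl(T_n(F))=\gen{\id,\af}\oplus\gen{\bt_i\mid 1\le i\le n}$ by the standard two inclusions, with the nontrivial direction being that every $\frac12$-derivation lies in the right-hand side. We already know from \cref{af-is-halfder} and the discussion preceding the proposition that $\id$, $\af$ and each $\bt_i$ are $\frac12$-derivations, and it is immediate that these $n+2$ maps are linearly independent (evaluate on $e_{11},\dots,e_{nn}$ and on $e_{1n}$: a relation $a\,\id+b\,\af+\sum c_i\bt_i=0$ applied to $e_{ii}$ gives $a e_{ii}+(\pm b)e_{1n}+c_i\dl=0$ forcing $a=0$, $c_i=0$, and then $b=0$). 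The sum is direct because $\gen{\id,\af}$ acts on the diagonal and off-diagonal block in a way not landing in $Z(T_n(F))$ while $\gen{\bt_i}$ maps into $Z(T_n(F))$ and kills commutators; concretely any common element would have to be both a multiple of $\id+$(multiple of $\af$) and a map into $\gen\dl$ vanishing on $[T_n,T_n]$, hence zero.

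So fix $\vf\in\Dl(T_n(F))$. The key point is that \cref{vf(e_ij)-in-<e_ij>,vf(e_ij)(i_j)=const} already pin down $\vf$ almost completely on the basis. First I would set $\lb:=\vf(e_{1n})(1,n)$; by \cref{vf(e_ij)(i_j)=vf(e_ij)(1_n)} together with \cref{vf(e_ij)-in-<e_ij>} we get $\vf(e_{ij})=\lb e_{ij}$ for \emph{all} $1\le i<j\le n$, so $\vf$ agrees with $\lb\cdot\id$ on the strictly-upper-triangular part $[T_n(F),T_n(F)]$. Next I turn to the diagonal units. By \cref{vf(e_ij)(i_j)=const}\cref{vf(e_ii)(j_k)=0}, $\vf(e_{ii})$ has no strictly-upper-triangular entries except possibly the $(1,n)$-entry, and that only when $i\in\{1,n\}$; and by \cref{vf(e_ij)(i_j)=const}\cref{vf(e_ii)(j_j)=vf(e_ii)(k_k)} all diagonal entries $\vf(e_{ii})(j,j)$ with $j\ne i$ are equal to a common scalar $\mu_i$. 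Writing $\nu_i:=\vf(e_{ii})(i,i)$, we thus have $\vf(e_{ii})=\nu_i e_{ii}+\mu_i(\dl-e_{ii})+\rho_i e_{1n}$ with $\rho_i=0$ unless $i\in\{1,n\}$. Moreover \cref{vf(e_ij)-in-<e_ij>} gives $\lb=\vf(e_{ij})(i,j)=\nu_i-\mu_i$ for every $i<j$ (take $j>i$), i.e. $\nu_i-\mu_i=\lb$ for $i<n$, and symmetrically from the second equality $\nu_j-\mu_j=\lb$ for $j>1$; combining, $\nu_i-\mu_i=\lb$ for all $i$. Hence $\vf(e_{ii})=\lb e_{ii}+\mu_i\dl+\rho_i e_{1n}$ for each $i$, and since $\mu_i\dl$ is exactly $\mu_i\bt_i(e_{ii})$, the map $\psi:=\vf-\lb\,\id-\sum_{i}\mu_i\bt_i$ satisfies $\psi(e_{ij})=0$ for $i<j$ and $\psi(e_{ii})=\rho_i e_{1n}$ with $\rho_i=0$ for $i\notin\{1,n\}$.

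It remains to identify $\psi$. Since $\psi\in\Dl(T_n(F))$ (it is a difference of $\frac12$-derivations), I apply \cref{basic-properties-of-vf(e_ij)}\cref{vf(e_ii)(i_j)=-vf(e_jj)(i_j)} with $(i,j)=(1,n)$: $\psi(e_{11})(1,n)=-\psi(e_{nn})(1,n)$, i.e. $\rho_1=-\rho_n=:\rho$. Therefore $\psi(e_{11})=\rho e_{1n}=\rho\,\af(e_{11})$, $\psi(e_{nn})=-\rho e_{1n}=\rho\,\af(e_{nn})$, $\psi$ vanishes on all other $e_{ii}$ and on all $e_{ij}$ with $i<j$ — exactly the formula defining $\rho\,\af$. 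Hence $\psi=\rho\,\af$ and $\vf=\lb\,\id+\rho\,\af+\sum_i\mu_i\bt_i\in\gen{\id,\af}\oplus\gen{\bt_i\mid 1\le i\le n}$, which completes the reverse inclusion and the proof.

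The main obstacle is purely bookkeeping: the lemmas in the excerpt are stated with side conditions (e.g. \cref{vf(e_ii)(j_k)=0} excludes $i\in\{1,n\}$, $(j,k)=(1,n)$; \cref{vf(e_ii)(j_j)=vf(e_ii)(k_k)} needs $i\notin\{j,k\}$), so the argument must handle the boundary indices $i=1$ and $i=n$ separately when extracting the scalars $\mu_i$ and $\rho_i$, and one must be careful that for $n=2$ the index ranges do not degenerate (there $\lb=\nu_1-\mu_1=\nu_2-\mu_2$ still follows from the single pair $(1,2)$, and $\rho_1=-\rho_2$ from \cref{basic-properties-of-vf(e_ij)}\cref{vf(e_ii)(i_j)=-vf(e_jj)(i_j)}). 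No genuinely new computation beyond the cited lemmas is needed.
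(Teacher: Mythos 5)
Your proposal is correct and follows essentially the paper's own argument: the same lemmas pin down $\vf(e_{ij})=\lb e_{ij}$ for $i<j$, the diagonal entries of $\vf(e_{ii})$ via \cref{vf(e_ij)-in-<e_ij>} and \cref{vf(e_ij)(i_j)=const}, and the $(1,n)$-entries via \cref{basic-properties-of-vf(e_ij)}\cref{vf(e_ii)(i_j)=-vf(e_jj)(i_j)}, yielding $\vf=\lb\,\id+\rho\,\af+\sum_i\mu_i\bt_i$. Subtracting $\lb\,\id+\sum_i\mu_i\bt_i$ and identifying the remainder with $\rho\,\af$ is only a cosmetic repackaging of the paper's direct computation of the coefficients, and your extra remarks on linear independence and the boundary indices are fine.
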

\begin{proof}
	In view of \cref{af-is-halfder} we only need to prove that each $\vf\in\Dl(T_n(F))$ is a linear combination of $\id$, $\af$ and $\bt_i$, $1\le i\le n$. Setting $a=\vf(e_{1n})(1,n)$, we have 
	\begin{align}\label{vf(e_ij)=ae_ij}
		\vf(e_{ij})=ae_{ij}\text{ for all }1\le i<j\le n
	\end{align} 
by \cref{vf(e_ij)-in-<e_ij>} and \cref{vf(e_ij)(i_j)=const}\cref{vf(e_ij)(i_j)=vf(e_ij)(1_n)}. Then
	\begin{align*}
		\vf(e_{ii})(i,i)-\vf(e_{ii})(j,j)=a=\vf(e_{ii})(i,i)-\vf(e_{ii})(k,k)
	\end{align*}
for all $k<i<j$ by \cref{vf(e_ij)=(vf(e_ii)(i_i)-vf(e_ii)(j_j))e_ij}. Hence, defining $b_i=\vf(e_{ii})(j,j)$ for some $j\ne i$, we have
	\begin{align}\label{vf(e_ii)(j_j)=a+b_i-or-b_i}
		\vf(e_{ii})(j,j)=
		\begin{cases}
			a+b_i, & j=i,\\
			b_i, & j\ne i.
		\end{cases}
	\end{align}
Finally, let $c=\vf(e_{11})(1,n)$. By \cref{basic-properties-of-vf(e_ij)}\cref{vf(e_ii)(i_j)=-vf(e_jj)(i_j)} and \cref{vf(e_ij)(i_j)=const}\cref{vf(e_ii)(j_k)=0} for all $1\le i\le n$ and $1\le j<k\le n$ we have
\begin{align}\label{vf(e_i)(j_k)=c-or-minus-c-or-0}
	\vf(e_{ii})(j,k)=
	\begin{cases}
		c, & (i,j,k)=(1,1,n),\\
		-c, & (i,j,k)=(n,1,n),\\
		0, & (i,j,k)\not\in\{(1,1,n),(n,1,n)\}.
	\end{cases}
\end{align}
It follows from \cref{vf(e_ij)=ae_ij,vf(e_ii)(j_j)=a+b_i-or-b_i,vf(e_i)(j_k)=c-or-minus-c-or-0} that $\vf=a\cdot\id+c\af+\sum_{i=1}^n b_i\beta_i$.
\end{proof}

\begin{thrm}\label{descr-TP-on-T_n(F)}
	Let $\ch(F)=0$ and $n>2$. Then any transposed Poisson algebra structure on $T_n(F)$ is of one of the following two non-isomorphic forms:
	\begin{enumerate}
		\item\label{cdot-of-Poisson-type} transposed Poisson algebra structure of Poisson type;
		\item\label{cdot-sum-of-Poisson-type-with-fixed} the orthogonal sum of the transposed Poisson algebra structure
		\begin{align}\label{e_11.e_11=e_nn.e_nn=-e_11.e_nn=-e_nn.e_11=e_1n}
			e_{11}\cdot e_{11}=-e_{11}\cdot e_{nn}=e_{nn}\cdot e_{nn}=e_{1n}
		\end{align}
  with a transposed Poisson algebra structure of Poisson type.
	\end{enumerate} 
\end{thrm}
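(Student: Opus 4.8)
The plan is to combine \cref{glavlem} with the structure of $\Dl(T_n(F))$ obtained in \cref{descr-Dl(T_n(F))}. Let $\cdot$ be a transposed Poisson structure on $T_n(F)$. For each $z\in T_n(F)$ the left multiplication $L_z=z\cdot(-)$ is a $\frac12$-derivation, so $L_z=\lb(z)\id+\mu(z)\af+\sum_{i=1}^n\nu_i(z)\bt_i$ for uniquely determined linear functionals $\lb,\mu,\nu_1,\dots,\nu_n$ on $T_n(F)$, which depend linearly on $z$. In particular $L_z(e_{ij})=\lb(z)e_{ij}$ for $i<j$, while $L_z(e_{ii})=\lb(z)e_{ii}+\mu(z)\af(e_{ii})+\nu_i(z)\dl$, so the whole product $\cdot$ is encoded by these scalars.

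First I would push the commutativity identity $L_z(x)=L_x(z)$ through pairs of basis matrix units. Comparing $L_{e_{ij}}(e_{kl})$ with $L_{e_{kl}}(e_{ij})$ for $i<j$ and $k<l$ and using $\dim[T_n(F),T_n(F)]>1$ (valid since $n>2$) forces $\lb$ to vanish on $[T_n(F),T_n(F)]$; then comparing $L_{e_{ij}}(e_{kk})$ with $L_{e_{kk}}(e_{ij})$ for an index $1<k<n$ and for a pair $(i,j)\ne(1,n)$ (both available because $n>2$) forces $\lb\equiv 0$ and $\mu(e_{ij})=\nu_i(e_{ij})=0$ for all $i<j$. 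Consequently $z\cdot[T_n(F),T_n(F)]=0$ and $[T_n(F),T_n(F)]\cdot T_n(F)=0$, and $\cdot$ is completely determined by $e_{pp}\cdot e_{qq}=\mu(e_{pp})\af(e_{qq})+\nu_q(e_{pp})\dl$. Writing $\af(e_{qq})=\ve_q e_{1n}$ with $\ve_1=1$, $\ve_n=-1$ and $\ve_q=0$ otherwise, commutativity now gives $\mu(e_{qq})=0$ for $1<q<n$, $\mu(e_{nn})=-\mu(e_{11})=:-t$, and symmetry of the matrix $A=(a_{pq})$, $a_{pq}:=\nu_q(e_{pp})$. Finally I would impose associativity on diagonal triples; since $e_{1n}$ lies in the annihilator of $\cdot$ and $\sum_k\ve_k=\sum_k\mu(e_{kk})=0$, the identity $(e_{pp}\cdot e_{qq})\cdot e_{rr}=e_{pp}\cdot(e_{qq}\cdot e_{rr})$ collapses to $a_{pq}d_r=a_{qr}d_p$ for all $p,q,r$, where $d_p=\sum_k a_{pk}$.

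Now the two cases fall out. If $t=0$, then $\cdot$ is the extension by zero of the form $*\colon V\times V\to Z(T_n(F))$ on the diagonal subspace $V$ given by $e_p* e_q=a_{pq}\dl$: it is commutative because $A$ is symmetric and associative precisely because of the relations $a_{pq}d_r=a_{qr}d_p$; so $\cdot$ is of Poisson type, which is case \cref{cdot-of-Poisson-type}. If $t\ne 0$, then conjugation by $\mathrm{diag}(t^{-1},1,\dots,1)$ is a Lie automorphism of $T_n(F)$ whose induced transposed Poisson structure has the same matrix $A$ but with $t$ replaced by $1$; replacing $\cdot$ by this isomorphic copy, a direct check gives $\cdot=\cdot_0+*$, where $\cdot_0$ is the structure \cref{e_11.e_11=e_nn.e_nn=-e_11.e_nn=-e_nn.e_11=e_1n} (which is a transposed Poisson structure: its nonzero left multiplications are $\pm\af$, so a computation in the spirit of \cref{af-is-halfder} applies) and $*$ is the Poisson-type structure attached to $A$. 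The inclusions $T_n(F)\cdot_0 T_n(F)\sst\gen{e_{1n}}\sst\Ann(T_n(F),*)$ and $T_n(F)* T_n(F)\sst\gen\dl\sst\Ann(T_n(F),\cdot_0)$ show the sum is orthogonal, which is case \cref{cdot-sum-of-Poisson-type-with-fixed}. To see the two forms are non-isomorphic, note that $T_n(F)\cdot T_n(F)\sst Z(T_n(F))$ holds in case \cref{cdot-of-Poisson-type} but fails in case \cref{cdot-sum-of-Poisson-type-with-fixed}, where $e_{11}\cdot e_{11}$ has a nonzero $e_{1n}$-component, and this inclusion is invariant under isomorphism of transposed Poisson algebras.

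I expect the main obstacle to be the bookkeeping in the commutativity step: extracting $\lb\equiv 0$ and the vanishing of $\mu$ and $\nu_i$ on commutators requires testing the identity $L_z(x)=L_x(z)$ on the right pairs of matrix units, and this is exactly where $n>2$ is used --- for $n=2$ the commutator subalgebra is one-dimensional and there is no index strictly between $1$ and $n$, which accounts for the extra class in that case. Once the product has been localized on the diagonal, the associativity computation and the recognition of the Poisson-type piece are routine.
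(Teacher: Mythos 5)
Your argument is correct, and its core follows the paper's own route: decompose each left multiplication via \cref{glavlem,descr-Dl(T_n(F))} as $\lb(z)\id+\mu(z)\af+\sum_i\nu_i(z)\bt_i$, exploit commutativity on pairs of matrix units (exactly where $n>2$ enters, via an interior index $1<k<n$ and an off-diagonal pair $\ne(1,n)$) to kill $\lb$ and the coefficients on $[T_n(F),T_n(F)]$, and then rescale the $\af$-coefficient $t\ne 0$ to $1$ by conjugation with $\mathrm{diag}(t^{-1},1,\dots,1)$ — the paper uses the very same conjugation, written as $(b^{-1}-1)e_{11}+\dl$. Two points of genuine difference. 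First, your explicit associativity analysis producing $a_{pq}d_r=a_{qr}d_p$ is correct but superfluous: since $\cdot$ is assumed associative and all surviving products land in $\gen{\dl,e_{1n}}$ with $[T_n(F),T_n(F)]$ annihilating everything, the diagonal restriction defining the Poisson-type piece inherits commutativity and associativity for free, which is how the paper avoids any such relations. Second, and more interestingly, your non-isomorphy argument is different from and lighter than the paper's: you use the invariant ``$T_n(F)\cdot T_n(F)\sst Z(T_n(F))$'', which holds for any structure of Poisson type and fails in case \cref{cdot-sum-of-Poisson-type-with-fixed} because $e_{11}\cdot e_{11}$ has a nonzero $e_{1n}$-component, and which is clearly preserved by any isomorphism of transposed Poisson algebras (it sends $\LL\cdot\LL$ onto the corresponding product image and $Z(\LL)$ onto $Z(\LL)$). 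The paper instead invokes the Marcoux--Sourour description of Lie automorphisms of $T_n(F)$ and tracks the products $e_{ii}\cdot e_{jj}$ modulo $\gen{\dl}$; your invariant bypasses that external input entirely, at no loss for the statement being proved (the paper's finer analysis would only matter if one wanted to distinguish structures within each of the two classes). Minor bookkeeping slips (e.g.\ writing $\nu_i(e_{ij})=0$ where you need $\nu_k(e_{ij})=0$ for all $k$, including the pair $(1,n)$) are harmless, since $\lb\equiv 0$ already gives $T_n(F)\cdot[T_n(F),T_n(F)]=0$ and commutativity does the rest.
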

\begin{proof}
	Let $\cdot$ be a transposed Poisson algebra structure on $T_n(F)$. By \cref{descr-Dl(T_n(F)),glavlem} for all $1\le i\le j\le n$ there exist $x_{ij},y_{ij}\in F$ and $\{z_{ij}^k\}_{k=1}^n\sst F$, such that
	\begin{align*}
		e_{ij}\cdot e_{kl}=x_{ij}e_{kl}+y_{ij}\af(e_{kl})+\sum_{s=1}^n z_{ij}^s\bt_s(e_{kl})=
		\begin{cases}
			x_{ij}e_{kl}, & k<l,\\
			x_{ij}e_{kk}+z_{ij}^k\dl, & k=l\not\in\{1,n\},\\
			x_{ij}e_{11}+y_{ij}e_{1n}+z_{ij}^1\dl, & k=l=1,\\
			x_{ij}e_{nn}-y_{ij}e_{1n}+z_{ij}^n\dl, & k=l=n.
		\end{cases}
	\end{align*}

Let $1\le i<j\le n$ and $1<k<n$. Then $e_{kk}\cdot e_{ij}=x_{kk}e_{ij}$, while $e_{ij}\cdot e_{kk}=x_{ij}e_{kk}+z_{ij}^k\dl$, whence
\begin{align}\label{x_kk=x_ij=x_ij=z_ij^k=0}
	x_{kk}=0\text{ and }x_{ij}=z_{ij}^k=0\text{ for }1\le i<j\le n\text{ and }1<k<n.
\end{align}
Now take $1\le i<j\le n$ with $(i,j)\ne (1,n)$. Then $e_{11}\cdot e_{ij}=x_{11}e_{ij}$, while $e_{ij}\cdot e_{11}=y_{ij}e_{1n}+z_{ij}^1\dl$. Similarly, $e_{nn}\cdot e_{ij}=x_{nn}e_{ij}$, while $e_{ij}\cdot e_{nn}=-y_{ij}e_{1n}+z_{ij}^n\dl$. Hence,
\begin{align}\label{x_11=x_nn=0}
	x_{11}=x_{nn}=0\text{ and }y_{ij}=z_{ij}^1=z_{ij}^n=0\text{ for }1\le i<j\le n\text{ with }(i,j)\ne (1,n).
\end{align}
Considering the products $e_{11}\cdot e_{1n}=0$, $e_{1n}\cdot e_{11}=y_{1n}e_{1n}+z_{1n}^1\dl$, $e_{nn}\cdot e_{1n}=0$ and $e_{1n}\cdot e_{nn}=-y_{1n}e_{1n}+z_{1n}^n\dl$, we have
\begin{align}\label{y_1n=z_1n^1=z_1n^n=0}
	y_{1n}=z_{1n}^1=z_{1n}^n=0.
\end{align}
Now, for any $1<i<n$ it follows from $e_{11}\cdot e_{ii}=z_{11}^i\dl$, $e_{ii}\cdot e_{11}=y_{ii}e_{1n}+z_{ii}^1\dl$, $e_{nn}\cdot e_{ii}=z_{nn}^i\dl$ and $e_{ii}\cdot e_{nn}=-y_{ii}e_{1n}+z_{ii}^n\dl$ that
\begin{align}\label{y_ii=0_z_ii^1=z_11^i_z_ii^n=z_nn^i}
	y_{ii}=0,\ z_{ii}^1=z_{11}^i\text{ and }z_{ii}^n=z_{nn}^i\text{ for }1<i<n.
\end{align}
Similarly, for $1<i,j<n$, it follows from $e_{ii}\cdot e_{jj}=z_{ii}^j\dl$ and $e_{jj}\cdot e_{ii}=z_{jj}^i\dl$ that
\begin{align}\label{z_ii^j=z_jj^i}
	z_{ii}^j=z_{jj}^i\text{ for }1<i,j<n.
\end{align}
Finally, $e_{11}\cdot e_{nn}=-y_{11}e_{1n}+z_{11}^n\dl$ and $e_{nn}\cdot e_{11}=y_{nn}e_{1n}+z_{nn}^1\dl$ yield
\begin{align}\label{y_nn=-y_11_z_11^n=z_nn^1}
	y_{nn}=-y_{11}\text{ and }z_{11}^n=z_{nn}^1.
\end{align}
Combining \cref{x_kk=x_ij=x_ij=z_ij^k=0,x_11=x_nn=0,y_1n=z_1n^1=z_1n^n=0,y_ii=0_z_ii^1=z_11^i_z_ii^n=z_nn^i,z_ii^j=z_jj^i,y_nn=-y_11_z_11^n=z_nn^1} and denoting $a_{ij}:=z_{ii}^j$, $b=y_{11}$, we see that the only (possibly) non-zero products $e_{ij}\cdot e_{kl}$ are
\begin{align}
	e_{ii}\cdot e_{jj}&=a_{ij}\dl,\ 1<i,j<n,\label{e_i.e_j=a_ij.dl}\\
	e_{11}\cdot e_{11}&=be_{1n}+a_{11}\dl,\ e_{nn}\cdot e_{nn}=be_{1n}+a_{nn}\dl,\ e_{11}\cdot e_{nn}=e_{nn}\cdot e_{11}=-be_{1n}+a_{1n}\dl,\label{e_1.e_1-e_n.e_n-e_1.e_n}
\end{align}
where $a_{ij}=a_{ji}$ for all $1\le i,j\le n$. 

If $b=0$, then $\cdot$ is of Poisson type, so we are in the case \cref{cdot-of-Poisson-type}. Otherwise, choosing $\phi$ to be the conjugation by $(b\m-1) e_{11}+\dl$, we have 
\begin{align*}
	\phi(e_{ij})=
	\begin{cases}
		e_{ij}, & 1<i\le j\le n\text{ or }i=j=1,\\
		b\m e_{ij}, & 1=i<j\le n.´
	\end{cases}
\end{align*}
It follows that, applying $\phi$, we can replace $b\ne 0$ by $b=1$ in \cref{e_1.e_1-e_n.e_n-e_1.e_n}, and we come to the form \cref{cdot-sum-of-Poisson-type-with-fixed}. 

To prove that the structures \cref{cdot-of-Poisson-type,cdot-sum-of-Poisson-type-with-fixed} are not isomorphic, observe from~\cite{MS} that for any automorphism $\phi$ of $(T_n(F),[\cdot,\cdot])$
\begin{align}
	\text{either } &\phi(e_{ii})\in e_{ii}+(\gen{\dl}\oplus\gen{e_{ij}\mid i<j})\text{ for all }1\le i\le n,\label{phi(e_ii)=e_ii+c.dl}\\
	\text{or } &\phi(e_{ii})\in -e_{n-i+1,n-i+1}+(\gen{\dl}\oplus\gen{e_{ij}\mid i<j})\text{ for all }1\le i\le n.\label{phi(e_ii)=-e_n-i+1_n-i+1+c.dl}
\end{align}
Consider the case \cref{phi(e_ii)=e_ii+c.dl}. Since $e_{ii}\cdot \dl\in\gen{\dl}=Z(T_n(F),[\cdot,\cdot])$ and $e_{ii}\cdot e_{jk}=0$ for all $1\le i\le n$ and $1\le j<k\le n$ by \cref{e_i.e_j=a_ij.dl,e_1.e_1-e_n.e_n-e_1.e_n}, any such $\phi$ leads to the product
\begin{align*}
	e_{ii}*e_{jj}=\phi(\phi\m(e_{ii})\cdot\phi\m(e_{jj}))\in \phi(e_{ii}\cdot e_{jj}+\gen{\dl})=\phi(e_{ii}\cdot e_{jj})+\gen{\dl},
\end{align*}
where $\phi(e_{ii}\cdot e_{jj})$ belongs either to $\gen{\dl}$ (whenever $e_{ii}\cdot e_{jj}\in\gen{\dl}$) or to $\pm b\phi(e_{1n})+\gen{\dl}$ (whenever $e_{ii}\cdot e_{jj}\in \pm be_{1n}+\gen{\dl}$). But $\phi(e_{1n})$ is a non-zero multiple of $e_{1n}$ by \cref{Z([T_n(F)_T_n(F)])=<e_1n>}, so 
\begin{align}\label{e_ii*e_jj-in-<dl>-iff-e_ii.e_jj-in-<dl>}
    e_{ii}*e_{jj}\in\gen{\dl}\iff e_{ii}\cdot e_{jj}\in\gen{\dl}.    
\end{align}
for all $1\le i,j\le n$. This shows that any structure of type \cref{cdot-of-Poisson-type} can be isomorphic only to a structure of type \cref{cdot-of-Poisson-type}. As to the case \cref{phi(e_ii)=-e_n-i+1_n-i+1+c.dl}, we similarly have
\begin{align*}
	e_{ii}*e_{jj}=\phi(\phi\m(e_{ii})\cdot\phi\m(e_{jj}))\in \phi(e_{n-i+1,n-i+1}\cdot e_{n-j+1,n-j+1}+\gen{\dl}).
\end{align*}
However
\begin{align*}
	e_{n-i+1,n-i+1}\cdot e_{n-j+1,n-j+1}\in \gen{\dl}&\iff (i,j)\not\in\{(1,1),(1,n),(n,1),(n,n)\}
	\iff e_{ii}\cdot e_{jj}\in \gen{\dl}.
\end{align*}
Thus, we again conclude that  \cref{e_ii*e_jj-in-<dl>-iff-e_ii.e_jj-in-<dl>} holds for all $1\le i,j\le n$ showing that the structures \cref{cdot-of-Poisson-type,cdot-sum-of-Poisson-type-with-fixed} cannot be isomorphic.

Conversely, it is directly verified that \cref{e_11.e_11=e_nn.e_nn=-e_11.e_nn=-e_nn.e_11=e_1n} is a transposed Poisson algebra structure on $T_n(F)$, and it is orthogonal to any transposed Poisson algebra structure of Poisson type on $T_n(F)$.
\end{proof}

The following \cref{descr-TP-on-T_2(F)} describes transposed Poisson structures on $T_2(F)$, and its proof is similar to that of \cref{descr-TP-on-T_n(F)} with the only difference that as a result we will have $3$ types of the structures, and inside each of the $3$ types the structures can be fully classified up to isomorphism. On the other hand, $T_2(F)$ is isomorphic to the $3$-dimensional Lie algebra $\mathfrak{g}_2^0$ from~\cite{bfk23} whose transposed Poisson structures were fully described in \cite[Proposition 14]{bfk23}. By this reason, we have chosen to omit the proof of \cref{descr-TP-on-T_2(F)} and indicate on the left of each of the found structures its isomorphic version from~\cite{bfk23}.

\begin{thrm}\label{descr-TP-on-T_2(F)}
	Let $\ch(F)=0$. Then any transposed Poisson algebra structure on $T_2(F)$ is isomorphic to exactly one of the following structures:
	\begin{enumerate}
		\item\label{cdot-of-Poisson-type-n=2} transposed Poisson algebra structure of Poisson type:
            \begin{enumerate}
                \item\label{trivial-TP} $\mathrm{T}_{09}^{0,0}:$ the trivial one;
                \item\label{e_11-cdot-e_11=dl} $\mathrm{T}_{17}^0:\ e_{11}\cdot e_{11}=\dl$;
                \item\label{e_ii-cdot-e_jj=(-1)^(i+j)dl} $\mathrm{T}_{10}^0:\ e_{11}\cdot e_{11}=-e_{11}\cdot e_{22}=e_{22}\cdot e_{22}=\dl$;
            \end{enumerate}
		\item\label{cdot-sum-of-Poisson-type-with-fixed-n=2} the orthogonal sum of the transposed Poisson algebra structure
		\begin{align*}
			e_{11}\cdot e_{11}=-e_{11}\cdot e_{22}=e_{22}\cdot e_{22}=e_{12}
		\end{align*}
  with a transposed Poisson algebra structure of Poisson type, which results in the structures:
            \begin{enumerate}
              \item\label{trivial-TP+(-1)^(i+j)e_12} $\mathrm{T}_{16}:\ e_{11}\cdot e_{11}=-e_{11}\cdot e_{22}=e_{22}\cdot e_{22}=e_{12}$;
             \item\label{e_11-cdot-e_11=dl+(-1)^(i+j)e_12} $\mathrm{T}_{18}:\ e_{11}\cdot e_{11}=e_{12}+\dl$, $-e_{11}\cdot e_{22}=e_{22}\cdot e_{22}=e_{12}$;
              \item\label{e_ii-cdot-e_jj=(-1)^(i+j)dl+(-1)^(i+j)e_12} $\mathrm{T}_{11}^0:\ e_{11}\cdot e_{11}=-e_{11}\cdot e_{22}=e_{22}\cdot e_{22}=e_{12}+\dl$;
            \end{enumerate}
		\item\label{cdot-sum-of-Poisson-type-with-family-n=2} the (non-orthogonal) sum of a transposed Poisson algebra structure of the family
		\begin{align*}
			e_{11}\cdot e_{11}=ce_{11},
			e_{11}\cdot e_{12}=-e_{12}\cdot e_{22}=ce_{12},
                e_{11}\cdot e_{22}=-e_{22}\cdot e_{22}=ce_{22},\ c\ne 0,
		\end{align*}
	 with a transposed Poisson algebra structure of Poisson type, which results in the structures:
        \begin{enumerate}
            \item\label{TP-c-family} $\mathrm{T}_{17}^{-c}:\ e_{11}\cdot e_{11}=ce_{11},\ e_{11}\cdot e_{12}=-e_{12}\cdot e_{22}=ce_{12},\ e_{11}\cdot e_{22}=-e_{22}\cdot e_{22}=ce_{22}$;
            \item\label{TP-c-family+e_22^2=-c.dl} $\mathrm{T}_{09}^{0,-c}:\ e_{11}\cdot e_{11}=ce_{11},\ e_{11}\cdot e_{12}=-e_{12}\cdot e_{22}=ce_{12},\ e_{11}\cdot e_{22}=ce_{22},\ e_{22}\cdot e_{22}=-c(e_{22}+\dl)$;
            \item\label{TP-c-family+e_11e_22=-c.dl_e_22^2=c.dl} $\mathrm{T}_{19}^{-c}:\ e_{11}\cdot e_{11}=ce_{11},\ e_{11}\cdot e_{12}=-e_{12}\cdot e_{22}=ce_{12},\ -e_{11}\cdot e_{22}=e_{22}\cdot e_{22}=ce_{11}$.
        \end{enumerate}
	\end{enumerate} 
\end{thrm}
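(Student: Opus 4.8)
The plan is to run the argument of \cref{descr-TP-on-T_n(F)}, now carried out by hand since $T_2(F)$ is only $3$-dimensional. Observe that \cref{descr-Dl(T_n(F))} is valid for $n=2$, so $\Dl(T_2(F))=\gen{\id,\af}\oplus\gen{\bt_1,\bt_2}$; hence, for any transposed Poisson structure $\cdot$ on $T_2(F)$, \cref{glavlem} lets me write $e_{ij}\cdot e_{kl}=x_{ij}e_{kl}+y_{ij}\af(e_{kl})+z_{ij}^1\bt_1(e_{kl})+z_{ij}^2\bt_2(e_{kl})$ for each $(i,j)\in\{(1,1),(2,2),(1,2)\}$ and suitable scalars $x_{ij},y_{ij},z_{ij}^1,z_{ij}^2$. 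First I would impose commutativity of $\cdot$ on the three unordered pairs of basis vectors $e_{11},e_{22},e_{12}$. Exactly as in the case $n>2$ this forces $e_{12}\cdot e_{12}=0$ and $e_{12}\cdot e_{ii}\in\gen{e_{12}}$, and it reduces the free parameters to five, which I will call $x,b,p,q,r$, so that the only possibly nonzero products become $e_{11}\cdot e_{11}=xe_{11}+be_{12}+p\dl$, $e_{11}\cdot e_{22}=xe_{22}-be_{12}+q\dl$, $e_{22}\cdot e_{22}=-xe_{22}+be_{12}+r\dl$, $e_{11}\cdot e_{12}=xe_{12}$ and $e_{22}\cdot e_{12}=-xe_{12}$.

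Next I would impose associativity of $\cdot$ on basis triples; I expect all of the resulting identities to reduce to the single quadratic relation $pr=q(x+q)$, so that the transposed Poisson structures on $T_2(F)$ are exactly parametrized by the quadric $\{(x,b,p,q,r)\in F^5\mid pr=q(x+q)\}$. One conceptual way to organize this is to note that $\gen{e_{12}}=[T_2(F),T_2(F)]$ is a square-zero ideal of $(T_2(F),\cdot)$ on which $e_{11}$ and $e_{22}$ act by the scalars $x$ and $-x$, so $(T_2(F),\cdot)$ is a square-zero extension of the $2$-dimensional commutative algebra $\gen{e_{11},e_{22}}/\gen{e_{12}}$; associativity of such an extension amounts to associativity of that $2$-dimensional algebra, the remaining module- and cocycle-type conditions holding automatically because the relevant data take values in $\gen{\dl}$.

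It then remains to classify the points of the quadric up to the action of $\mathrm{Aut}(T_2(F),[\cdot,\cdot])$. I would begin by observing that $x$ is an isomorphism invariant — it is the eigenvalue of the $\cdot$-multiplication by $e_{11}$ on $[T_2(F),T_2(F)]=\gen{e_{12}}$, and every automorphism of $T_2(F)$ scales $e_{12}$ and fixes $e_{11}$ modulo $Z(T_2(F))+[T_2(F),T_2(F)]$ — and split accordingly. If $x=0$, then $\gen{e_{12}}\sst\Ann(T_2(F),\cdot)$ and $\cdot$ is valued in $\gen{\dl,e_{12}}$: when $b=0$ the structure is of Poisson type, classified by the (necessarily degenerate) symmetric bilinear form on the complement $\gen{e_{11},e_{22}}$ of $[T_2(F),T_2(F)]$ that sends $(e_{11},e_{11}),(e_{11},e_{22}),(e_{22},e_{22})$ to $p,q,r$, whose orbits under $\mathrm{Aut}(T_2(F))$ give the trivial structure, $\mathrm{T}_{17}^{0}$ and $\mathrm{T}_{10}^{0}$; when $b\neq 0$ the product splits as the orthogonal sum of the $\af$-type structure $e_{11}\cdot e_{11}=-e_{11}\cdot e_{22}=e_{22}\cdot e_{22}=e_{12}$ with a structure of Poisson type, giving $\mathrm{T}_{16}$, $\mathrm{T}_{18}$ and $\mathrm{T}_{11}^{0}$. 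If $x\neq 0$, conjugating by a suitable automorphism normalizes the $e_{12}$- and part of the $\dl$-data to the family $e_{11}\cdot e_{11}=ce_{11}$, $e_{11}\cdot e_{12}=-e_{12}\cdot e_{22}=ce_{12}$, $e_{11}\cdot e_{22}=-e_{22}\cdot e_{22}=ce_{22}$ with $c:=x$, possibly plus a residual term in $\gen{\dl}$; its three inequivalent forms produce $\mathrm{T}_{17}^{-c}$, $\mathrm{T}_{09}^{0,-c}$ and $\mathrm{T}_{19}^{-c}$. Finally I would verify that the nine listed structures are pairwise non-isomorphic, using $x$ together with secondary invariants such as $\dim\Ann(T_2(F),\cdot)$ and whether $\cdot$ is of Poisson type.

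The step I expect to be the main obstacle is exactly this isomorphism classification: collecting the $4$-parameter family into precisely the nine announced orbits, and proving the representatives pairwise non-isomorphic, is delicate because $\mathrm{Aut}(T_2(F))$ does not act as the full general linear group on the complement $\gen{e_{11},e_{22}}$ (it preserves the functional recording the adjoint action on $e_{12}$), so several superficially different normal forms coincide while some that look alike do not. A way to bypass this bookkeeping — and the route adopted in this paper — is to note that $T_2(F)$ is isomorphic to the $3$-dimensional Lie algebra $\mathfrak{g}_2^0$ of \cite{bfk23} and to read off the full answer, including the non-isomorphism claims, from \cite[Proposition~14]{bfk23}.
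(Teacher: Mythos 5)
Your proposal is correct and follows essentially the same route as the paper, which omits the proof precisely because it parallels that of \cref{descr-TP-on-T_n(F)} (via \cref{descr-Dl(T_n(F))}, valid for $n=2$) and because the isomorphism classification can be read off from \cite[Proposition 14]{bfk23} using $T_2(F)\cong\mathfrak{g}_2^0$. Your intermediate details check out: commutativity does reduce the products to the five-parameter family $(x,b,p,q,r)$ you describe, associativity reduces to the single relation $pr=q(x+q)$, and $x$ is indeed an isomorphism invariant, so the split into the three types and the final appeal to \cite{bfk23} matches the paper's intended argument.
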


	\section{Transposed Poisson structures on the full matrix Lie algebra}\label{sec-TP-on-M_n}
In this short section we describe transposed Poisson structures on the full matrix algebra $M_n(F)$. As above, we denote by $e_{ij}$, $1\le i,j\le n$, the matrix units and by $\dl$ the identity matrix. Recall that 
\begin{align*}
    Z(M_n(F))&=\gen{\dl},\\
    [M_n(F),M_n(F)]&=\mathfrak{sl}_n(F)=\{a\in M_n(F)\mid \tr(a)=0\}\\
    &=\gen{e_{ij}\mid 1\le i\ne j\le n}\oplus\gen{e_{11}-e_{ii}\mid 1<i\le n}.     
\end{align*}
Denote by $\Dl(M_n(F))$ the space of $\frac 12$-derivations of the Lie algebra $(M_n(F),[\cdot,\cdot])$. The linear map $\gm:M_n(F)\to M_n(F)$ given by
\begin{align*}
	\gm(e_{ij})=
	\begin{cases}
		\dl, & i=j,\\
		0, & i\ne j,
	\end{cases}
\end{align*}
belongs to $\Dl(M_n(F))$ as a linear map $M_n(F)\to Z(M_n(F))$ annihilating $[M_n(F),M_n(F)]$.

 \begin{prop}\label{descr-Dl(M_n(F))}
     Let $n>1$. Then $\Dl(M_n(F))=\gen{\id,\gm}$.
 \end{prop}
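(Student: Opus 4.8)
The plan is to reduce the computation to that of the $\frac12$-derivations of $\mathfrak{sl}_n(F)$ by means of the decomposition $M_n(F)=\mathfrak{sl}_n(F)\oplus\gen{\dl}$ into Lie ideals, which is available because $\ch(F)=0$ forces $\tr(\dl)=n\ne 0$, so $\dl\notin\mathfrak{sl}_n(F)$.

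First I would extract two facts from the general observations of \cref{prelim}. Since $\gen{\dl}=\Ann(M_n(F))$ is invariant under every $\frac12$-derivation of $M_n(F)$, each $\vf\in\Dl(M_n(F))$ satisfies $\vf(\dl)=\mu\dl$ for some $\mu\in F$ (equivalently, apply \cref{vf(xy)=half(vf(x)y+xvf(y))} to $[\dl,x]=0$ to see that $\vf(\dl)$ is central). Likewise, $\mathfrak{sl}_n(F)=[M_n(F),M_n(F)]$ is $\vf$-invariant, and the restriction $\vf|_{\mathfrak{sl}_n(F)}$ is a $\frac12$-derivation of the Lie algebra $\mathfrak{sl}_n(F)$, because \cref{vf(xy)=half(vf(x)y+xvf(y))} for $x,y\in\mathfrak{sl}_n(F)$ only involves elements of $\mathfrak{sl}_n(F)$.

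The second, and main, step is to prove that $\Dl(\mathfrak{sl}_n(F))=F\cdot\id$; this is the expected principal obstacle. It amounts to the (well-known) statement that a simple Lie algebra over a field of characteristic zero admits no non-trivial $\frac12$-derivations, and one may either quote this from the theory of $\delta$-derivations initiated by Filippov \cite{fil1}, or establish it directly by a matrix-unit argument patterned on \cref{basic-properties-of-vf(e_ij),vf(e_ij)-in-<e_ij>,vf(e_ij)(i_j)=const}: applying \cref{vf(xy)=half(vf(x)y+xvf(y))} to the relations $e_{ij}=[e_{ii},e_{ij}]$, $e_{ij}=[e_{ik},e_{kj}]$ (for $i,j,k$ pairwise distinct), $[e_{ii},e_{jj}]=0$ and $e_{ii}-e_{jj}=[e_{ij},e_{ji}]$ pins down $\vf|_{\mathfrak{sl}_n(F)}$ to a single scalar $\lb$, the case $n=2$ being a short direct computation in $\mathfrak{sl}_2(F)=\gen{e_{12},e_{21},e_{11}-e_{22}}$.

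Finally I would assemble the pieces. With $\lb$ as above and $\mu$ as in the first step, writing $e_{ii}=(e_{ii}-\tfrac1n\dl)+\tfrac1n\dl$ with $e_{ii}-\tfrac1n\dl\in\mathfrak{sl}_n(F)$ gives $\vf(e_{ii})=\lb e_{ii}+\tfrac{\mu-\lb}{n}\dl$, while $\vf(e_{ij})=\lb e_{ij}$ for $i\ne j$. Since $\gm$ vanishes on $\mathfrak{sl}_n(F)$ and $\gm(\dl)=n\dl$, these identities say exactly that $\vf=\lb\cdot\id+\tfrac{\mu-\lb}{n}\gm$. Together with the inclusion $\gen{\id,\gm}\sst\Dl(M_n(F))$ already noted before the statement, and the linear independence of $\id$ and $\gm$ for $n>1$ (e.g.\ $\gm(e_{12})=0\ne e_{12}$), this yields $\Dl(M_n(F))=\gen{\id,\gm}$.
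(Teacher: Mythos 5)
Your proposal is correct and follows essentially the same route as the paper: decompose $M_n(F)=\mathfrak{sl}_n(F)\oplus\gen{\dl}$ using the invariance of $[\LL,\LL]$ and the center under $\frac12$-derivations, invoke Filippov~\cite{fil1} for the triviality of $\Dl(\mathfrak{sl}_n(F))$, and reassemble via $e_{ii}=(e_{ii}-\tfrac1n\dl)+\tfrac1n\dl$ to identify $\vf$ as a combination of $\id$ and $\gm$. The optional matrix-unit argument you sketch in place of the citation is only an outline, but since you also offer the reference (the paper uses \cite[Corollary 3]{fil1}, applicable because the Killing form of $\mathfrak{sl}_n(F)$ is nondegenerate in characteristic zero), the proof stands as written.
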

 \begin{proof}
     It is easy to see that, as a Lie algebra, $M_n(F)$ is the direct sum $[M_n(F),M_n(F)]\oplus Z(M_n(F))$ Namely, every $a\in M_n(F)$ decomposes uniquely as 
     \begin{align*}
        a=\left(a-\frac {\tr(a)}n\dl\right)+\frac {\tr(a)}n\dl.    
     \end{align*}
     Since $\frac 12$-derivations of a Lie algebra $\LL$ leave $[\LL,\LL]$ and $Z(\LL)$ invariant, we have
     \begin{align*}
         \Dl(M_n(F))=\Dl([M_n(F),M_n(F)])\oplus\Dl(Z(M_n(F))).
     \end{align*}
     By \cite[Corollary 3]{fil1} any $\vf\in\Dl([M_n(F),M_n(F)])$ is trivial. Since $\dim(Z(M_n(F)))=1$, any $\vf\in\Dl(Z(M_n(F)))$ is trivial as well. Hence, given $\vf\in\Dl(M_n(F))$, there exist $k_1,k_2\in F$ such that $\vf=k_1\cdot\id_{[M_n(F),M_n(F)]}+k_2\cdot\id_{Z(M_n(F))}$. Clearly,
     \begin{align*}
         \vf(e_{ij})&=k_1e_{ij},\text{ if }i\ne j,\\
         \vf(e_{ii})&=k_1\left(e_{ii}-\frac 1n\dl\right)+\frac{k_2}n\dl=k_1e_{ii}+\frac{k_2-k_1}n\dl.
     \end{align*}
It follows that $\vf=k_1\cdot\id_{M_n(F)}+\frac{k_2-k_1}n\cdot\gm$. Thus, $\Dl(M_n(F))\sst\gen{\id,\gm}$. The converse inclusion is trivial.
 \end{proof}

 \begin{thrm}\label{descr-TP-on-M_n(F)}
	Let $\ch(F)=0$ and $n>1$. Then, up to isomorphism, there is only one non-trivial transposed Poisson algebra structure on $M_n(F)$. It is given by
 \begin{align}\label{e_ii-cdot-e_jj=dl}
     e_{ii}\cdot e_{jj}=\dl,\ 1\le i,j\le n,
 \end{align}
 and it is of Poisson type.
\end{thrm}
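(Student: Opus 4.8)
The plan is to combine \cref{descr-Dl(M_n(F))} with \cref{glavlem} exactly as in the proof of \cref{descr-TP-on-T_n(F)}, but the computation is much shorter because $\Dl(M_n(F))$ is only two-dimensional. Let $\cdot$ be a transposed Poisson algebra structure on $M_n(F)$. By \cref{glavlem} the left multiplication by any $z\in M_n(F)$ is a $\frac12$-derivation, so by \cref{descr-Dl(M_n(F))} there are scalars $x_{ij},y_{ij}\in F$ with
\begin{align*}
    e_{ij}\cdot a=x_{ij}a+y_{ij}\gm(a)\quad\text{for all }a\in M_n(F).
\end{align*}
First I would exploit commutativity: applying this to $a=e_{kl}$ with $k\ne l$ gives $e_{ij}\cdot e_{kl}=x_{ij}e_{kl}$, while the same product read the other way is $e_{kl}\cdot e_{ij}=x_{kl}e_{ij}+y_{kl}\gm(e_{ij})$. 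For $i\ne j$ this forces $x_{ij}e_{kl}=x_{kl}e_{ij}$ whenever $(i,j)\ne(k,l)$ are both off-diagonal, and since $n>1$ one can always pick a third off-diagonal pair to conclude $x_{ij}=0$ for all $i\ne j$; feeding this back, $e_{kl}\cdot e_{ij}=y_{kl}\gm(e_{ij})=0$ for $i\ne j$ as well, so $M_n(F)\cdot\mathfrak{sl}_n(F)=0$. In particular $e_{ij}\cdot\dl$ is central (it lies in the image of a $\frac12$-derivation applied to the central element $\dl$, or directly $e_{ij}\cdot\dl\in\gen{\dl}$), and since $\dl=e_{11}+\dots+e_{nn}$ and the $\mathfrak{sl}_n$-part of each $e_{kk}-e_{ll}$ is killed, one gets that all products reduce to the diagonal ones $e_{ii}\cdot e_{jj}$, which must land in $\gen{\dl}$. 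Writing $e_{ii}\cdot e_{jj}=a_{ij}\dl$ with $a_{ij}=a_{ji}$ (commutativity), associativity $ (e_{ii}\cdot e_{jj})\cdot e_{kk}=e_{ii}\cdot(e_{jj}\cdot e_{kk})$ gives $a_{ij}(\dl\cdot e_{kk})=a_{jk}(\dl\cdot e_{ii})$, i.e. $a_{ij}a_{\cdot}=\dots$; since $\dl\cdot e_{kk}=(\sum_s e_{ss})\cdot e_{kk}=(\sum_s a_{sk})\dl$, this pins the matrix $(a_{ij})$ down to a rank-one shape. So $\cdot$ is the extension by zero of a commutative associative product $*$ on a complement of $\mathfrak{sl}_n(F)$, hence of Poisson type, and is determined by a single symmetric associative structure on a one-dimensional-up-to-the-diagonal-span.

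Next I would identify the nontrivial case. The complement $V$ of $[M_n(F),M_n(F)]=\mathfrak{sl}_n(F)$ is one-dimensional, spanned by $\dl$ (or any fixed matrix of nonzero trace), and $Z(M_n(F))=\gen{\dl}$, so by the discussion preceding the theorem (the paragraph on extensions by zero, around \cref{(a_1+a_2)-times-(b_1+b_2)}) the structure $\cdot$ is the extension by zero of a commutative associative product $*:V\times V\to\gen{\dl}$. Such a product is $\dl* \dl=\mu\dl$ for a scalar $\mu$; it is trivial iff $\mu=0$, and for $\mu\ne 0$ rescaling $\dl\mapsto\mu^{-1}\dl$ inside $V$ (which is realised by an automorphism of the Lie algebra $M_n(F)$? — here one must be slightly careful, see below) normalises $\mu=1$. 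Unwinding the extension-by-zero formula with $e_{ii}=\tfrac1n\dl+(e_{ii}-\tfrac1n\dl)$, whose $V$-component is $\tfrac1n\dl$, yields $e_{ii}\cdot e_{jj}=\tfrac1{n^2}(\dl*\dl)=\tfrac{\mu}{n^2}\dl$, so up to the scalar normalisation we get precisely \cref{e_ii-cdot-e_jj=dl}. Finally I would check directly that \cref{e_ii-cdot-e_jj=dl} is indeed a transposed Poisson structure: it is visibly commutative; associativity holds since all triple products vanish (any product already lies in $\gen{\dl}$ and $e_{ii}\cdot\dl=e_{ii}\cdot(\sum_j e_{jj})=n\dl\ne$ — wait, rather $\dl\in Z$ so $\dl$ times anything under $[\cdot,\cdot]$ is zero); and \cref{Trans-Leibniz} holds because its right-hand side vanishes (products are central) and its left-hand side vanishes (commutators lie in $\mathfrak{sl}_n(F)$, which is annihilated). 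This last verification is the same "of Poisson type" argument given in the preliminaries, so it can be cited rather than redone.

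The one genuine subtlety — and the step I expect to need the most care — is the normalisation $\mu\ne0\rightsquigarrow\mu=1$ and the claim that distinct nonzero $\mu$ give isomorphic structures. Unlike in \cref{descr-TP-on-T_n(F)}, where conjugation by $(b^{-1}-1)e_{11}+\dl$ did the job, here $Z(M_n(F))$ is central and $[M_n(F),M_n(F)]$ has no complement stable under enough automorphisms to rescale $\dl$ by an arbitrary scalar via conjugation. The clean way around this is to observe, as the paper flags in \cref{ext-dl.dl=dl}, that \cref{e_ii-cdot-e_jj=dl} is isomorphic (as a transposed Poisson algebra) to the extension by zero of the $1$-dimensional product $\dl\cdot\dl=\dl$, and that any two nonzero $\mu$ produce extensions by zero of isomorphic $1$-dimensional products $\dl*\dl=\mu\dl\cong\dl\cdot\dl=\dl$ (rescale the $1$-dimensional space), whence the induced transposed Poisson algebras are isomorphic — the point being that the isomorphism of the abstract $1$-dimensional algebras, combined with the identity on $\mathfrak{sl}_n(F)$, is a transposed Poisson isomorphism of the whole structures because the $\mathfrak{sl}_n$-part is inert. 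With that observation in hand, uniqueness up to isomorphism of the nontrivial structure follows, completing the proof.
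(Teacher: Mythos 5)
Your overall route is the paper's: combine \cref{glavlem} with \cref{descr-Dl(M_n(F))}, kill the mixed products via commutativity, reduce everything to $e_{ii}\cdot e_{jj}\in\gen{\dl}$, and normalise the remaining scalar by a Lie algebra automorphism; the final ``of Poisson type'' verification is also the paper's. Your worry about the normalisation is unfounded: the map which is the identity on $\mathfrak{sl}_n(F)$ and multiplication by $c\ne 0$ on $\gen{\dl}$ \emph{is} an automorphism of the Lie algebra $M_n(F)$ (both summands are ideals), and it is exactly the paper's $\phi(e_{ij})=e_{ij}$ for $i\ne j$, $\phi(e_{ii})=e_{ii}+\frac{c-1}{n}\dl$; your ``rescale the one-dimensional factor and act as the identity on $\mathfrak{sl}_n$'' is the same map, so no detour through \cref{ext-dl.dl=dl} is needed.

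The middle of your argument, however, has a genuine gap. First, after $x_{ij}=0$ for $i\ne j$ you still need $x_{kk}=0$ and $y_{ij}=0$ for $i\ne j$; these come from comparing $e_{kk}\cdot e_{ij}=x_{kk}e_{ij}$ with $e_{ij}\cdot e_{kk}=y_{ij}\dl$, a comparison you only gesture at (``feeding this back'', ``the $\mathfrak{sl}_n$-part is killed''); also, for $n=2$ there is no ``third'' off-diagonal pair, though two suffice. More seriously, the associativity step does not deliver what you then use: writing $e_{ii}\cdot e_{jj}=a_{ij}\dl$, the identity $(e_{ii}\cdot e_{jj})\cdot e_{kk}=e_{ii}\cdot(e_{jj}\cdot e_{kk})$ reads $a_{ij}\sum_s a_{sk}=a_{jk}\sum_s a_{si}$, which is satisfied by \emph{every} symmetric rank-one matrix $a_{ij}=t_it_j$; so ``rank-one shape'' is all you get, and a non-constant rank-one matrix is neither the extension by zero of a product on the one-dimensional complement nor a transposed Poisson structure. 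The jump from ``rank-one'' to ``$\cdot$ is the extension by zero of $\dl*\dl=\mu\dl$'', i.e.\ to constancy of $(a_{ij})$, is therefore unjustified as written. The fix is a one-liner already contained in your setup and is how the paper argues: once $x_{ii}=0$, left multiplication by $e_{ii}$ equals $y_{ii}\gm$, so $e_{ii}\cdot e_{jj}=y_{ii}\dl$ is independent of $j$, and commutativity gives $y_{ii}=y_{jj}$ for all $i,j$; associativity is never needed. With that replacement your proof coincides with the paper's.
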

\begin{proof}
    Let $\cdot$ be a transposed Poisson algebra structure on $M_n(F)$. By \cref{descr-Dl(M_n(F)),glavlem} for all $1\le i,j\le n$ there exist $x_{ij}$ and $y_{ij}\in F$, such that
		\begin{align*}
			e_{ij}\cdot e_{kl}=x_{ij}e_{kl}+y_{ij}\gm(e_{kl})=
			\begin{cases}
				x_{ij}e_{kl}, & k\ne l,\\
				x_{ij}e_{kk}+y_{ij}\dl, & k=l.
			\end{cases}
		\end{align*}
		Let $i\ne j$ and $1\le k\le n$. Then $e_{kk}\cdot e_{ij}=x_{kk}e_{ij}$, while $e_{ij}\cdot e_{kk}=x_{ij}e_{kk}+y_{ij}\dl$, whence
		\begin{align*}
			x_{kk}=0\text{ and }x_{ij}=y_{ij}=0\text{ for }i\ne j\text{ and }1\le k\le n.
		\end{align*}
		It follows from $e_{ii}\cdot e_{jj}=y_{ii}\dl$ and $e_{jj}\cdot e_{ii}=y_{jj}\dl$ that
		\begin{align*}
			y_{ii}=y_{jj}\text{ for }1\le i,j\le n.
		\end{align*}
		Thus, denoting $y_{ii}$ by $c$, we have the only (possibly) non-zero products 
		\begin{align*}
			e_{ii}\cdot e_{jj}=c\dl,\ 1\le i,j\le n.
		\end{align*}
		If $c=0$, then $\cdot$ is trivial. Otherwise, choosing the automorphism $\phi$ of $(M_n(F),[\cdot,\cdot])$ given by
		\begin{align*}
			\phi(e_{ij})&=e_{ij},\text{ if }i\ne j,\\
			\phi(e_{ii})&=e_{ii}-\frac 1n\dl+\frac cn\dl=e_{ii}+\frac {c-1}n\dl,
		\end{align*}
		we obtain the isomorphic structure \cref{e_ii-cdot-e_jj=dl}. 
		
		It remains to see that \cref{e_ii-cdot-e_jj=dl} is of Poisson type. For all $a,b\in M_n(F)$ we have
		\begin{align*}
			a\cdot b=\sum_{i,j=1}^n a(i,i)a(j,j)e_{ii}\cdot e_{jj}=\sum_{i,j=1}^n a(i,i)a(j,j)\dl=\tr(a)\tr(b)\dl=\frac{\tr(a)}n\dl\cdot\frac{\tr(b)}n\dl.
		\end{align*}
		Hence, $\cdot$ is the extension by zero of a product on the complement $Z(M_n(F))$ of $[M_n(F),M_n(F)]$ in $M_n(F)$ with values in $Z(M_n(F))$.
\end{proof}

\begin{rem}\label{ext-dl.dl=dl}
    The structure \cref{e_ii-cdot-e_jj=dl} is isomorphic to the extension by zero of the product $\dl\cdot \dl=\dl$.
\end{rem}

	
	




\end{document}